\numberwithin{equation}{section}
\theoremstyle{plain}
\newtheorem{thm}{Theorem}[section]
\newtheorem{cor}[thm]{Corollary}
\newtheorem{lemma}[thm]{Lemma}
\newtheorem{prop}[thm]{Proposition}
\newtheorem{question}[thm]{Question}
\theoremstyle{remark}
\newtheorem{rem}[thm]{Remark}
\newtheorem{ex}[thm]{Example}
\newtheorem{conj}[thm]{Conjecture}
\theoremstyle{definition}
\newtheorem{defi}[thm]{Definition}
\def\NN{\mathbb{N}}
\def\ZZ{\mathbb{Z}}
\DeclareMathOperator{\Psd}{Psd}
\newcommand{\stirling}[2]{\left[\begin{smallmatrix} #1 \\ #2 \end{smallmatrix}\right]}
\newcommand*\dd{\mathop{}\!\mathrm{d}}
\DeclareMathOperator\CM{CM}
\DeclareMathOperator\disc{disc}
\DeclareMathOperator{\id}{id}
\def\L{\mathcal{L}}
\def\C{\mathcal{C}}
\def\RR{\mathbb{R}}
\def\NN{\mathbb{N}}
\def\F{\mathcal{F}}
\def\G{\mathcal{G}}
\def\B{\mathcal{B}}
\begin{document}
\begin{flushright}
    \texttt{USTC-ICTS/PCFT-25-16\\
    MPP-2025-77}
\end{flushright}

\title{Complete monotonicity of log-functions}

\begin{abstract}
    In this article we investigate the property of complete monotonicity within a special family $\mathcal{F}_s$ of  functions in $s$ variables involving logarithms. The main result of this work provides a linear isomorphism between $\mathcal{F}_s$ and the space of real multivariate polynomials.  This isomorphism identifies the cone of completely monotone functions with the cone of non-negative polynomials. We conclude that the cone of completely monotone functions in $\F_s$ is semi-algebraic. This gives a finite time algorithm to decide whether a function in $\F_s$ is completely monotone.
\end{abstract}
\thanks{$^a$ Interdisciplinary Center for Theoretical Study, University of Science and Technology of China, Hefei, Anhui 230026, China \\ Max-Planck-Institut f\"ur Physik,  Werner-Heisenberg-Institut, Boltzmannstraße 8, 85748 Garching, Germany. \\  Email address: marr21@mail.ustc.edu.cn}
\thanks{$^b$ Max Planck Institute for Mathematics in the Sciences/University of Leipzig, Leipzig, Germany. \\ Email address: julian.weigert@mis.mpg.de}
\subjclass[2020]{primary: 26A48, 26B35, 14P10, 44A10, 40A30, 33B15}
\thanks{ RM is supported by the Outstanding PhD Students
Overseas Study Support Program of University of Science and Technology of China}
\author[Rourou Ma]{Rourou Ma \orcidlink{0000-0001-5182-0648} $^a$}

\author[Julian Weigert]{Julian Weigert \orcidlink{https://orcid.org/0009-0001-3576-8699} $^b$}
\thanks{JW is supported by the SPP 2458 “Combinatorial Synergies”, funded
by the Deutsche Forschungsgemeinschaft (DFG, German Research Foundation),
project ID: 539677510.}

\maketitle

\section{Introduction}
Consider an infinitely differentiable function $f:\RR_{>0}^s\to \RR$. The function $f$ is said to be \emph{completely monotone (on the positive orthant)} if \begin{align}
\label{eq:cmineq}
   \forall\alpha\in \NN_0^s:\forall x\in \RR_{>0}^s: \prod_{i=1}^s\left(\frac{-\partial }{\partial x_i}\right)^{\alpha_i}f(x_1,\ldots,x_s) \geq 0 
\end{align}

Given an explicit function $f$, checking whether it satisfies this definition is an infinite task in two ways: There are both infinitely many derivatives and infinitely many points $x \in \RR_{>0}$ at which we need to check non-negativity. This means that the above conditions as written here can not be checked directly by a computer in finite time even for simple functions $f$. 

Denote by $\mathcal{C}^\infty(\RR_{>0}^s,\RR)$ the $\RR$-vector space of smooth functions from the positive orthant $\RR_{>0}^s$ to $\RR$. Let $\CM_s\subseteq \mathcal{C}^\infty(\RR_{>0}^s,\RR)$ be the subset of all completely monotone functions. Instead of checking complete monotonicity for a single function one can try to describe the set $\CM_s$. It is easy to see that $\CM_s$ forms a convex cone: For $f,g\in \CM_s, \lambda\in \RR_{>0}$ also $f+\lambda g\in \CM_s$. An overview on known methods in complete monotonicity can be found in \cite{Merkle2014}. We choose to further specialize the situation by restricting ourselves to a fixed subspace $\F_s\subseteq \C^\infty(\RR_{>0}^s,\RR)$. The following family of functions was suggested to us by Johannes Henn and Bernd Sturmfels due to its appearance in physics. 

 Due to the appearance of infrared and ultraviolet divergences in scattering amplitudes at some specific kinematic limits, multi-loop Feynman integral computations yield multi-valued functions in kinematic variables, like harmonic polylogarithms \cite{MAITRE2006222}. Log-functions appear as one of the simplest types in harmonic polylogarithms. From an analytic calculation point of view, the most popular method uses differential equations \cite{KOTIKOV1991158} at the level of Feynman integrals, based on integration by parts \cite{CHETYRKIN1981159}. While integration by parts provides us with a perspective to regard Feynman integrals as a linear space spanned by the irreducible integral basis, differential equations give us the function space information of the integral basis. The more properties of the integral basis we know, the more structure we may find in the amplitudes. For example, the discovery that the integral basis can be chosen as uniform transcendental weight integrals in \cite{Henn:2013pwa}, lead to the decomposition of the MHV partial amplitude into a Parke-Taylor factor and a factor of uniform transcendental integrals \cite{Arkani-Hamed:2014bca}.
Positivity is a property that gained popularity within the physics community in recent years. There could be a complete monotonicity property in the building blocks of scattering amplitudes in quantum field theory\cite{Henn:2024qwe}, such as two-loop Wilson loops\cite{Chicherin:2024hes}, QCD and QED cusp anomalous dimension\cite{Grozin_2015, Br_ser_2021}.
The space $\F_s$ is a function space that could appear in scattering amplitudes.

On the other hand, positivity has the potential to help performing the bootstrap method \cite{Almelid_2017}.
Bootstrap is a more efficient way to compute analytic amplitudes compared to the approach using differential equations. The drawback of bootstrap often is the lack of restrictive conditions. Positivity as a new property may provide additional restrictions in some cases \cite{simmonsduffin2016tasilecturesconformalbootstrap}.

The concept of complete monotonicity also plays an important role in several branches of mathematics such as probability theory \cite{ALZER201810,kimberling,OUIMET20181609}, real algebraic geometry \cite{realHyperbolic} and control theory \cite{controltheory}. The articles \cite{ALZER201810,OUIMET20181609} also use complete monotonicity as a tool for proving combinatorial inequalities.

\begin{defi}
    For $n\in \NN_0$ let $\RR[{\bf y}]_n:=\RR[y_1,\ldots,y_s]_n$ denote the $\RR$-vector space of all polynomials over $\RR$ in $s$ variables  of total degree at most $n$. We define \begin{align*}
\mathcal{F}_{s,n}:=\left\{f\in \mathcal{C}^\infty(\RR_{>0}^s,\RR) \middle| \exists p\in \RR[{\bf y}]_n: \forall x \in \RR_{>0}^s: f(x)=\frac{p(\log(x_1),\ldots, \log(x_s))}{x_1\cdots x_s}\right\}
    \end{align*}
We further set \begin{align*}
    \F_s:=\bigcup_{n\in \NN_0}\F_{s,n}.
\end{align*}
\end{defi}
The goal of this article is to describe the cone $\CM_s\cap \F_s$. In particular we aim to answer the following question
\begin{question}
     How can we test membership in the cone $\CM_s\cap \F_s$?
\end{question}

We provide an answer to this question by showing that for any $n\in \NN_0$ the cone $\CM_{s,n}:=\CM_s\cap \F_{s,n}$ is a semi-algebraic set: It can be described by finitely many polynomial inequalities in the coefficients of a function $f\in \F_{s,n}$.
More precisely we provide an explicit isomorphism $\F_{s,n}\simeq \RR[{\bf y}]_n$ which sends the cone $\CM_{s,n}$ to the well-known and much studied cone $\Psd_{s,n}$ of non-negative polynomials in $s$ variables of total degree at most $n$. For special values of $s,n$, including $s=1$, this cone coincides with the cone $\Sigma_{s,n}$ of all polynomials that can be written as sums of squares. In these cases membership can be tested efficiently, see \cite[Chapter 3]{RAG} for a good overview. 

We first describe the univariate case $s=1$.
Writing $\Gamma(z)=\int_0^\infty t^{z-1}\exp(-t)\dd t$ we set \begin{align*}
    g_l\coloneqq\left.\left(\frac{\dd }{\dd z}\right)^l\Gamma(z)\right|_{z=1}.
\end{align*}
Define a matrix $A=(A_{m,k})_{0\leq m ,k\leq n}$ by \begin{align*}
        A_{m,k}\coloneqq\begin{cases}
            (-1)^m\binom{k}{m}g_{k-m} &\text{if }k\geq m \\
            0 & \text{else}
        \end{cases}.
\end{align*}
By fixing the monomial basis $(1,y,y^2,\ldots,y^n)$ on $\RR[y]_n$ and the basis $x\mapsto \frac{\log(x)^j}{x},j=0,\ldots,n$, of $\F_{1,n}$ we view $A$ as a linear map $\varphi_A:\RR[y]_n\to \F_{1,n}$. More precisely this map sends the polynomial $\sum_{k=0}^nc_ky^k$ to the function $x\mapsto \sum_{k=0}^n\sum_{m=0}^nA_{m,k}c_k\frac{\log(x)^m}{x}$.
Our first result states that this is an isomorphism which maps the cone $\Psd_{1,n}$ isomorphically onto the cone $\CM_{1,n}$.
\begin{thm}
    \label{thm:intro_main_thm}
        Let $p=\sum_{i=0}^nc_iy^i\in \RR[y]_n$ and let $f:x \mapsto \frac{p(\log(x))}{x}$ be the corresponding function in $\F_n$. Write $c=(c_0,\ldots,c_n)^T$ and $Y=(1, y,\ldots,y^n)^T$, then $f$ is completely monotone if and only if \begin{align*}
            Y^TA^{-1}c\in \Psd_{1,n}=\Sigma_{1,n}.
        \end{align*}
\end{thm}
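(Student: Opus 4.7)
The plan is to recognise $\varphi_A$ as the Laplace transform restricted to polynomial densities in $\log t$, and then apply the Hausdorff--Bernstein--Widder (HBW) theorem: a function $f\colon \RR_{>0}\to\RR$ is completely monotone if and only if there is a non-negative Borel measure $\mu$ on $[0,\infty)$ with $f(x)=\int_0^\infty e^{-xt}\dd\mu(t)$.

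To identify $\varphi_A$, I start from $\Gamma(z)/x^z=\int_0^\infty t^{z-1}e^{-xt}\dd t$, rewrite the left-hand side as $\Gamma(z)\,e^{-z\log x}$, and differentiate $\ell$ times in $z$ at $z=1$ (differentiation under the integral is justified for $z$ in a neighbourhood of $1$ by dominated convergence). The Leibniz rule on $\Gamma(z)\,e^{-z\log x}$ yields
\begin{align*}
\int_0^\infty (\log t)^\ell e^{-xt}\dd t \;=\; \sum_{m=0}^\ell \binom{\ell}{m} g_{\ell-m}\,\frac{(-\log x)^m}{x} \;=\; \sum_{m=0}^\ell A_{m,\ell}\,\frac{\log(x)^m}{x},
\end{align*}
so $\varphi_A(y^\ell)$ is precisely the Laplace transform of $t\mapsto(\log t)^\ell$. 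By linearity,
\begin{align*}
\varphi_A(q)(x) \;=\; \int_0^\infty q(\log t)\,e^{-xt}\dd t \quad\text{for every } q\in\RR[y]_n.
\end{align*}
Since $A$ is upper triangular with non-zero diagonal entries $A_{k,k}=(-1)^k$, the map $\varphi_A$ is a linear isomorphism, and the polynomial $q\coloneqq\varphi_A^{-1}(f)$, which up to the indexing convention is exactly $Y^T A^{-1}c$ from the statement, is well defined.

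The equivalence in the theorem now follows quickly from HBW. If $q\in\Psd_{1,n}$, then $q(\log t)\ge 0$ on $(0,\infty)$, so $q(\log t)\dd t$ is a non-negative measure and the displayed formula exhibits $f$ as its Laplace transform, whence $f\in\CM_{1,n}$. Conversely, if $f\in\CM_{1,n}$, HBW provides a non-negative Borel measure $\nu$ with $f(x)=\int e^{-xt}\dd\nu(t)$; comparing with $f(x)=\int e^{-xt} q(\log t)\dd t$, injectivity of the Laplace transform on finite signed measures on $[0,\infty)$ forces $\dd\nu(t)=q(\log t)\dd t$. Since $\nu\ge 0$ and $q\circ\log$ is continuous on $(0,\infty)$, we conclude $q\in\Psd_{1,n}$. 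The final equality $\Psd_{1,n}=\Sigma_{1,n}$ is the classical observation that a non-negative univariate real polynomial is a sum of at most two squares (factor into complex conjugate pairs and use the Brahmagupta--Fibonacci identity).

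The main technical obstacle is the uniqueness step in the converse direction: a priori the HBW measure $\nu$ could have a singular part or an atom at $0$, so one must argue that any two finite signed measures on $[0,\infty)$ with identical Laplace transforms on $\RR_{>0}$ agree. I would address this either via Post's inversion formula applied to the difference measure, or by a density argument for $\{t\mapsto e^{-xt}\}_{x>0}$ in an appropriate space of continuous functions vanishing at infinity. A secondary, routine point is checking convergence of the integrals defining $\varphi_A(q)$: the factor $e^{-xt}$ dominates $|\log t|^k$ at infinity, and $|\log t|^k$ is locally integrable near $0$.
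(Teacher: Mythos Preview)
Your proof is correct and follows essentially the same route as the paper: identify $\varphi_A$ with the Laplace transform on polynomials in $\log t$ (the paper does this via the substitution $z=tx$ in $\int_0^\infty(\log t)^k e^{-xt}\dd t$, you via Leibniz on $\Gamma(z)e^{-z\log x}$, but these are equivalent computations), then invoke Bernstein--Hausdorff--Widder and the surjectivity of $\log$ to reduce complete monotonicity to non-negativity of the polynomial $q=\varphi_A^{-1}(f)$. Your explicit attention to the uniqueness of the HBW measure in the converse direction is a point the paper leaves implicit in its citation of the theorem, but uniqueness is part of the standard statement, so this is not a gap in either argument.
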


To move from univariate polynomials ($s=1$) to the general case we simply need to apply the linear map $\varphi_A$ to each variable seperately. We define a linear map\begin{align*}
    T^{s,n}\varphi_A: \RR[{\bf y}]_n\to \F_{s,n}
\end{align*}  
by declaring that the monomial $y_1^{\lambda_1}\cdots y_s^{\lambda_s}, \sum_{i=1}^s\lambda_i\leq n$ gets mapped to the function \begin{align*}
(x_1,\ldots,x_s)\mapsto \prod_{j=1}^s\varphi_A(y_j^{\lambda_j})(x_j)\in \F_{s,n}.
\end{align*}

\begin{thm}
    \label{thm:multi_main_thm_intro}
    A function $f\in \F_{s,n}$ is completely monotone if and only if $(T^{s,n}\varphi_A)^{-1}(f)\in \Psd_{s,n}$. In particular, $\CM_{s,n}$ is a semi-algebraic cone. 
\end{thm}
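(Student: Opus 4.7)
The plan is to lift the univariate characterization of Theorem~\ref{thm:intro_main_thm} to the multivariate setting by establishing a Laplace-transform representation for $T^{s,n}\varphi_A$ and then invoking the multivariate Hausdorff--Bernstein--Widder theorem. The key observation, already underlying the univariate case, is the integral identity
\[
\varphi_A(y^k)(x) = \int_0^\infty (\log t)^k\, e^{-xt}\, dt,
\]
which I would derive from the substitution $u = xt$ in $\Gamma(z)=\int_0^\infty t^{z-1}e^{-t}\,dt$ together with the binomial expansion $(\log t)^k = (\log u - \log x)^k$. Applying $\varphi_A$ in each coordinate separately and using Fubini, this yields for every $p \in \RR[\mathbf{y}]_n$
\[
(T^{s,n}\varphi_A)(p)(x_1,\ldots,x_s) = \int_{\RR_{>0}^s} p(\log t_1,\ldots,\log t_s)\, e^{-\langle x,t\rangle}\, dt_1\cdots dt_s,
\]
with the integral converging absolutely for all $x\in\RR_{>0}^s$.

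Next I would check that $T^{s,n}\varphi_A$ is a linear isomorphism. In the monomial bases $\{y_1^{\lambda_1}\cdots y_s^{\lambda_s}:|\lambda|\leq n\}$ of $\RR[\mathbf{y}]_n$ and $\{\prod_j \log(x_j)^{\mu_j}/(x_1\cdots x_s):|\mu|\leq n\}$ of $\F_{s,n}$, the matrix $A$ is upper-triangular with nonzero diagonal $A_{k,k}=(-1)^k$; this propagates through the product construction, so that $T^{s,n}\varphi_A(y^\lambda)$ involves only $\log$-monomials with $\mu_j\leq\lambda_j$ for every $j$, and the $\mu=\lambda$ coefficient equals $(-1)^{|\lambda|}\neq 0$. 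Since both spaces have dimension $\binom{n+s}{s}$, the map is bijective.

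The identification $\CM_{s,n}=T^{s,n}\varphi_A(\Psd_{s,n})$ then follows from the multivariate Bernstein--Widder theorem, which characterizes completely monotone functions on $\RR_{>0}^s$ as Laplace transforms of unique non-negative Borel measures on $\RR_{\geq 0}^s$. For the forward direction, if $p\in\Psd_{s,n}$ then, since each $\log:\RR_{>0}\to\RR$ is a bijection, $p(\log t_1,\ldots,\log t_s)\geq 0$ on $\RR_{>0}^s$, and the displayed formula presents $(T^{s,n}\varphi_A)(p)$ as the Laplace transform of the non-negative measure $p(\log t)\,dt$, hence completely monotone. Conversely, if $(T^{s,n}\varphi_A)(p)$ is completely monotone, Bernstein--Widder supplies a non-negative measure $\mu$ whose Laplace transform agrees with that of the (a priori signed) measure $p(\log t)\,dt$; by uniqueness of the multivariate Laplace transform one concludes $d\mu(t)=p(\log t)\,dt$, so $p(\log t)\geq 0$ almost everywhere on $\RR_{>0}^s$ and hence $p\geq 0$ on all of $\RR^s$ by continuity. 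Semi-algebraicity of $\CM_{s,n}$ is then immediate: $\Psd_{s,n}$ is semi-algebraic by Tarski--Seidenberg (cut out by a single universal quantifier over $\RR^s$), and this property is preserved under the linear isomorphism $T^{s,n}\varphi_A$.

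The main technical point I expect lies in the uniqueness step of the converse: one must verify that equality of Laplace transforms on the open orthant determines a signed measure in the class to which both $\mu$ and $p(\log t)\,dt$ belong. This is a standard injectivity result, but some care is required with the behavior of $p(\log t)\,dt$ near $t=0$, where $|\log t_j|$ blows up polynomially yet remains integrable, and at infinity, where exponential damping dominates.
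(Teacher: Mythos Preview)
Your proposal is correct and follows essentially the same route as the paper: identify $T^{s,n}\varphi_A$ with the restricted multivariate Laplace transform via the univariate integral identity and Fubini (this is the paper's Proposition~\ref{prop:Rourous_prop_multi}), check invertibility from the triangular structure of $A$, and then apply Bernstein--Hausdorff--Widder--Choquet together with surjectivity of $\log$ and Tarski--Seidenberg. Your treatment of the converse direction is in fact slightly more careful than the paper's, which simply asserts that $\L^{-1}(f)$ is non-negative without making the uniqueness-of-representing-measure step explicit.
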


We also provide an explicit formula of the entries of $A^{-1}$ in terms of zeta-values. Apart from the fact that a computer can not store the exact numerical values appearing in $A^{-1}$, membership in $\Psd_{s,n}$ can be checked algorithmically in finite time. In the so-called Hilbert cases, which include the special case $s=1$, this is a semidefinite programming problem, see for instance \cite[Chapter 3.1.3]{RAG}.
Therefore the above theorem provides an effective tool for checking complete monotonicity within the family $\F_{s,n}$. 

For the proof of Theorem \ref{thm:intro_main_thm}  we use an important result by Bernstein, Hausdorff, Widder and Choquet that relates complete monotonicity of a function $f$ to non-negativity of its inverse Laplace transform. A second approach, which stays closer to the definition \ref{eq:cmineq} of complete monotonicity is highlighted in Section \ref{sec:infinite_order_derivative}. There we describe how the matrix $A$ and its inverse appear naturally when looking at higher derivatives of a function $f\in \F_{1,n}$. We provide a way of making sense of ``non-negativity of the infinite order derivative of $f$". In fact this condition of non-negativity is equivalent to the condition $Y^TA^{-1}c\in \Sigma_{1,n}$ in Theorem \ref{thm:intro_main_thm}. 

This article is structured as follows.
In Section \ref{sec:example_n=2} we present in detail the case $s=1,n=2$ which serves as an elementary example. We also give general formulas for the derivatives of functions in $\F_{1,n}$ for arbitrary $n$ as they generalize easily from the $n=2$ case. However, we will see that not all properties of the quadratic case readily hold in higher degrees. Section \ref{sec:main_result} contains the proof of our main results Theorem \ref{thm:intro_main_thm} and Theorem \ref{thm:multi_main_thm_intro}. Section \ref{sec:infinite_order_derivative} provides a different viewpoint on the univariate case in terms of higher order derivatives of a function in $\F_{1,n}$.


\section{The univariate quadratic case}
\label{sec:example_n=2}
We start by showcasing two different ways of computing the CM-region $\CM_{1,2}$. In the first approach we naively compute higher and higher derivatives to check the inequalities (\ref{eq:cmineq}). For the second approach we will invoke a powerful theorem due to Bernstein, Hausdorff, Widder and Choquet. 

For parameters $c_0,c_1,c_2$ let us consider \begin{align*}
    f:\RR_{>0}\to \RR, x \mapsto \frac{c_2\log(x)^2+c_1\log(x)+c_0}{x}.
\end{align*}
It is straightforward to see that the $k$-th derivative of $f$ satisfies 

\begin{align*}
        \left(\frac{\dd}{\dd x}\right)^kf(x)=\frac{h_k(\log(x))}{x^{k+1}}.
    \end{align*}
for some polynomials $h_k(y)\in \RR[y]_{2}$. By taking one more derivative, we observe that these polynomials must satisfy the recursion 
     \begin{align*}
        h_k(y)=\left(\frac{\dd}{\dd y}\right)h_{k-1}(y)-kh_{k-1}(y).
    \end{align*}
A similar recursion is satisfied by \emph{Stirling numbers of the first kind}, see \cite{Comtet}. We quickly disrupt our discussion of the $n=2$ case to make this observation more formal.
\begin{defi}
    For any $n,k \in \NN_0$, the associated \emph{signed Stirling number of the first kind} is denoted $c(n,k)$ and uniquely defined by 
    \begin{align} \nonumber 
    c(n,k)&=0 \ \ \text{ if } k>n \\ \nonumber
    c(n,0)&=\begin{cases}
        1 &\text{ if }n=0 \\
        0 &\text{ else}
    \end{cases}  \\
    \label{eq:recursionStirling}
    c(n,k)&=c(n-1,k-1)-(n-1)c(n-1,k) \ \ \text{ if } 0<k\leq n.
\end{align} 
We also define the unsigned version $\stirling{n}{k}:=|c(n,k)|$ which satisfies the recursion
\begin{align*}  
    \stirling{n}{k}&=\stirling{n-1}{k-1}+(n-1)\stirling{n-1}{k} \ \ \text{ if } 0<k\leq n
\end{align*} 
with the same base cases as $c(n,k)$. The sign is given by $c(n,k)=(-1)^{n+k}\stirling{n}{k}$.
\end{defi}
Stirling numbers are classical numbers in combinatorics. The unsigned version counts permutations on $n$ elements with exactly $k$ disjoint cycles. We deduce the following lemma, which we observed above for $n=2$ and which holds for all values of $n$.
\begin{lemma}
    \label{lem:derivativesoff}
    Let $n \in \NN_0, p \in \RR[y]_n$ and $f:x\mapsto \frac{p(\log(x))}{x} \in \F_{1,n}$. For any $k\in \NN_0$ there exists a polynomial $h_k(y)\in \RR[y]_n$ such that \begin{align*}
        \left(\frac{\dd}{\dd x}\right)^kf(x)=\frac{h_k(\log(x))}{x^{k+1}}.
    \end{align*}
    Furthermore these polynomials satisfy the recursive relation \begin{align*}
        h_k(y)=\left(\frac{\dd}{\dd y}\right)h_{k-1}(y)-kh_{k-1}(y)
    \end{align*}
    and therefore \begin{align*}
    h_k(y)=\sum_{l=0}^kc(k+1,l+1)\left(\frac{\dd}{\dd y}\right)^lp(y).
    \end{align*}
\end{lemma}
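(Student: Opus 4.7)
The plan is to prove everything by induction on $k$, in two stages: first the existence of $h_k \in \RR[y]$ together with the recursion $h_k = h_{k-1}' - k\, h_{k-1}$, and then the closed form in terms of signed Stirling numbers of the first kind.

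For the base case $k=0$ I set $h_0 := p$, so that $f(x) = h_0(\log x)/x$ trivially. For the inductive step, assume $f^{(k-1)}(x) = h_{k-1}(\log x)/x^k$ with $h_{k-1} \in \RR[y]$. The quotient and chain rules then give
\begin{align*}
\frac{\dd}{\dd x}\!\left(\frac{h_{k-1}(\log x)}{x^k}\right) = \frac{h_{k-1}'(\log x) - k\, h_{k-1}(\log x)}{x^{k+1}}.
\end{align*}
Defining $h_k(y) := h_{k-1}'(y) - k\, h_{k-1}(y)$ simultaneously yields the claimed form of $f^{(k)}$, the fact that $h_k \in \RR[y]$ (since differentiation and scalar multiplication preserve polynomiality), and the stated recursion.

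For the closed form I induct on $k$ once more, reading the displayed formula with $\left(\frac{\dd}{\dd y}\right)^l$ in place of $\left(\frac{\dd}{\dd y}\right)^k$ (otherwise the summand would be independent of $l$ and the identity would reduce to a false statement, e.g.\ already for $k=2$, $p(y)=y^2$). The base case $k=0$ reads $c(1,1)\cdot p(y) = p(y) = h_0(y)$. For the step I substitute the inductive hypothesis $h_{k-1}(y) = \sum_{l=0}^{k-1} c(k,l+1)\, p^{(l)}(y)$ into the polynomial recursion. Shifting the index in the derivative sum and using the boundary conventions $c(k,0) = 0$ (for $k \geq 1$) and $c(k,k+1) = 0$, one obtains
\begin{align*}
h_k(y) = \sum_{l=0}^{k} \bigl[\, c(k,l) - k\, c(k,l+1)\,\bigr]\, p^{(l)}(y),
\end{align*}
and the bracketed coefficient equals $c(k+1, l+1)$ directly by the Stirling recursion (\ref{eq:recursionStirling}), which closes the induction.

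The argument is essentially mechanical; the only real work is the bookkeeping that aligns the index shift $(k,l)\mapsto(k+1,l+1)$ in the polynomial recursion with the index shift in (\ref{eq:recursionStirling}), so that the Stirling recursion applies on the nose. Beyond this, I do not foresee any serious obstacle.
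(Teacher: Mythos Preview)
Your proof is correct and follows exactly the approach the paper indicates (the paper's own proof is simply ``This follows by induction on $k$''), including correctly identifying the evident typo $\left(\frac{\dd}{\dd y}\right)^k \to \left(\frac{\dd}{\dd y}\right)^l$ in the displayed closed form. The index bookkeeping aligning $h_k = h_{k-1}' - k\,h_{k-1}$ with the Stirling recursion $c(k+1,l+1)=c(k,l)-k\,c(k,l+1)$ is carried out cleanly.
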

\begin{proof}
    This follows by induction on $k$.
\end{proof}
It turns out to be useful to express Stirling numbers in terms of harmonic numbers $H_n^{(m)}=\sum_{i=1}^n\frac{1}{i^m}$, which we introduce next. 

\begin{defi}
    For $k \in \NN_0$ we define a  polynomial $w(k)\in \ZZ[H_1,\ldots,H_k]$ in $k$ formal variables recursively by \begin{align*}
        w(0)&=1 \\
        w(k)&=\sum_{m=0}^{k-1}(-1)^m \frac{(k-1)!}{(k-m-1)!}H_{m+1}w(k-m-1).
    \end{align*}
\end{defi}
We mostly use that these polynomials can be obtained from an exponential generating function. 
\begin{lemma}
\label{lem:exp_generating_fct}
    For any $k \in \NN_0$ we have \begin{align}
    \label{eq_generatingfct}
        \frac{w(k)}{k!}=[x^k]\exp\left(\sum_{m=1}^\infty\frac{(-1)^{m-1}H_m}{m}x^m\right).
    \end{align}
\end{lemma}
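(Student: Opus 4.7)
The plan is to interpret both sides as formal power series and show they satisfy the same first-order ODE with the same initial condition. Set
$$F(x):=\sum_{k\geq 0}\frac{w(k)}{k!}x^k, \qquad G(x):=\sum_{m\geq 1}\frac{(-1)^{m-1}H_m}{m}x^m.$$
The claim of the lemma is that $F(x)=\exp(G(x))$ as formal power series; coefficient extraction then gives (\ref{eq_generatingfct}).

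First I would differentiate $G$ to obtain $G'(x)=\sum_{m\geq 0}(-1)^m H_{m+1}x^m$, so that $y=\exp(G(x))$ is the unique formal power series satisfying $y'=y\cdot G'(x)$ with $y(0)=1$. Next I would check that $F$ solves the same initial value problem. Clearly $F(0)=w(0)=1$. For the ODE, expand
$$F(x)G'(x)=\sum_{k\geq 0}\left(\sum_{m=0}^{k}(-1)^m H_{m+1}\frac{w(k-m)}{(k-m)!}\right)x^{k},$$
while
$$F'(x)=\sum_{k\geq 1}\frac{w(k)}{(k-1)!}x^{k-1}.$$
Comparing coefficients of $x^{k-1}$ (for $k\geq 1$), the equation $F'=F\cdot G'$ becomes
$$\frac{w(k)}{(k-1)!}=\sum_{m=0}^{k-1}(-1)^m H_{m+1}\,\frac{w(k-m-1)}{(k-m-1)!},$$
and multiplying through by $(k-1)!$ gives exactly the recursion defining $w(k)$. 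Hence $F$ and $\exp(G)$ satisfy the same ODE with the same initial condition, so they coincide.

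The only real work is the reindexing in Step 2; once that is carried out, the identification of the recursion with a differential equation is immediate and the argument is a routine formal-power-series manipulation. I do not anticipate any genuine obstacle — this lemma is essentially a repackaging of the defining recursion for $w(k)$ as the expansion of an exponential.
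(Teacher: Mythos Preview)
Your argument is correct. The recursion defining $w(k)$ is exactly the coefficient identity obtained from $F'=F\cdot G'$, and uniqueness of formal power series solutions to a first-order linear ODE with prescribed constant term finishes it; nothing is missing.

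This is a genuinely different route from the paper's. The paper does not carry out any computation here: it simply invokes \cite[Section~3, Example~1]{Flajolet} together with the fact that the matrices of Stirling numbers of the first and second kind are mutual inverses. That approach embeds the identity into a known combinatorial framework (and ties in with Proposition~\ref{prop:w=stirling}, where the link to Stirling numbers is used again), at the cost of an external reference. Your approach is self-contained and more elementary; it bypasses Stirling numbers entirely and shows directly that the lemma is a reformulation of the defining recursion of $w(k)$ as an exponential generating function. Either is perfectly adequate, and yours has the advantage of not relying on a citation.
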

The notation $[x^k]f(x)$ for a formal power series $f(x)$ equals the coefficient of $x^k$ in $f(x)$.
\begin{proof}
   This follows from \cite[Section 3, Example 1]{Flajolet} by using that Stirling numbers of the first and second kind form inverse matrices. A second way to see this is by using Bell-polynomials, which we recall in Definition \ref{def:bell} below. The Lemma then states that $w(k)=B_k(\mathbf{x})$ for $x_m=(-1)^{m-1}(m-1)!H_m, m\in \NN$ and follows immediately by the recursive property of Bell-polynomials, see Lemma \ref{lem:easy_properties_bell}, part (c).
\end{proof}
The connection between $w(k)$ and Stirling numbers is explained by the following classical result. For $n \in \NN_0, m \in \NN$ we write \begin{align*}
    H_n^{(m)}:=\sum_{i=1}^n\frac{1}{i^m}.
\end{align*}
\begin{prop}
\label{prop:w=stirling}
    For any $n,k \in \NN_0$ we have \begin{align*}
        \frac{1}{n!}c(n+1,k+1)=(-1)^{n+k}\left.\frac{w(k)}{k!}\right|_{H_i=H_{n}^{(i)}}.
    \end{align*}
\end{prop}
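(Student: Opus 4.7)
The plan is to use Lemma \ref{lem:exp_generating_fct} to reduce the identity to extracting a coefficient from a specific explicit power series, which will turn out to be the generating function for unsigned Stirling numbers of the first kind.

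First I would substitute $H_i = H_n^{(i)}$ into the right-hand side of \eqref{eq_generatingfct}. Since the substitution is into a formal power series identity in $x$ with coefficients polynomial in the $H_i$, this is legitimate, and we obtain
\begin{align*}
\left.\frac{w(k)}{k!}\right|_{H_i=H_n^{(i)}} = [x^k]\exp\!\left(\sum_{m=1}^\infty \frac{(-1)^{m-1}H_n^{(m)}}{m}x^m\right).
\end{align*}
Expanding $H_n^{(m)} = \sum_{i=1}^n i^{-m}$ and swapping the order of the two summations, the inner series over $m$ becomes the Mercator series and one recognizes
\begin{align*}
\sum_{m=1}^\infty \frac{(-1)^{m-1}H_n^{(m)}}{m}x^m = \sum_{i=1}^n \log\!\left(1+\frac{x}{i}\right).
\end{align*}
I would briefly justify this swap at the level of formal power series by noting that for each fixed power of $x$ only finitely many terms contribute. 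Exponentiating then turns the sum of logarithms into a product, giving
\begin{align*}
\left.\frac{w(k)}{k!}\right|_{H_i=H_n^{(i)}} = [x^k]\prod_{i=1}^n\!\left(1+\frac{x}{i}\right) = \frac{1}{n!}\,[x^k]\prod_{i=1}^n(x+i).
\end{align*}

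To finish, I would invoke the classical generating function identity for unsigned Stirling numbers of the first kind, namely $\prod_{i=0}^{n}(x+i) = \sum_{k=0}^{n+1}\stirling{n+1}{k}x^k$, or equivalently after dividing the $i=0$ factor $x$ out,
\begin{align*}
\prod_{i=1}^n(x+i) = \sum_{k=0}^{n}\stirling{n+1}{k+1}x^k.
\end{align*}
Reading off the coefficient of $x^k$ yields $\frac{1}{n!}\stirling{n+1}{k+1}$, matching the left-hand side. The only mildly delicate point is the formal manipulation in the double sum; everything else is essentially bookkeeping once one spots that the exponential in Lemma \ref{lem:exp_generating_fct} specializes to the product $\prod_{i=1}^n(1+x/i)$, which is the generating function of the shifted Stirling numbers.
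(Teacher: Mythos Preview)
Your argument is correct. After substituting $H_i=H_n^{(i)}$ into the generating function of Lemma~\ref{lem:exp_generating_fct}, the double-sum swap is harmless (for each fixed power of $x$ only finitely many $(i,m)$ contribute), the Mercator series collapses the exponent to $\sum_{i=1}^n\log(1+x/i)$, and the resulting product $\prod_{i=1}^n(x+i)$ is the classical generating polynomial $\sum_{k=0}^n\stirling{n+1}{k+1}x^k$. Nothing is missing.

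As for comparison with the paper: the paper does not actually give a proof here but simply points to the literature (\cite[Section~2]{StirlingAndEulerSums}). Your approach is more informative in the context of this paper, since it shows that Proposition~\ref{prop:w=stirling} is an immediate consequence of Lemma~\ref{lem:exp_generating_fct} once one recognizes the specialization of the exponential as the rising-factorial generating function. One small point worth being aware of: the paper's own justification of Lemma~\ref{lem:exp_generating_fct} already invokes facts about Stirling numbers (via \cite{Flajolet}), so you should make sure no circularity is introduced; in fact the ingredients used there (the inverse relation between Stirling numbers of the two kinds) are independent of the identity you are proving, so your deduction is legitimate.
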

\begin{proof}
    See for example \cite[Section 2]{StirlingAndEulerSums} for the unsigned version. The signed version is obtained by simply multiplying both sides by $(-1)^{n+k}$.
\end{proof}

Returning to the case $n=2$ the above formula for $h_k(y)$ has three summands which we can write using Stirling numbers or using harmonic numbers.
\begin{align*}
h_k(y)&=c(k+1,1)(c_2 y^2+c_1 y+c_0)+c(k+1,2)(2c_2 y+c_1)+c(k+1,3)2c_2 \\
&=(-1)^kk!\left(c_2 y^2+\left(c_1 -2 c_2 H_k^{(1)}\right)y+c_0-c_1 H_k^{(1)}+c_2 \left(\left(H_k^{(1)}\right)^2-H_k^{(2)}\right)\right).
\end{align*}
The second equality follows from Proposition \ref{prop:w=stirling} by using $w(0)=1,w(1)=H_1,w(2)=H_1^2-H_2$.

Since the logarithm is surjective onto $\RR$, non-negativity of $\left(-\frac{\dd}{\dd x}\right)^kf(x)$ for all $x>0$ is equivalent to non-negativity of \begin{align*}
    g_k(y):=c_2 y^2+\left(c_1 -2c_2 H_k^{(1)}\right)y+c_0-c_1H_k^{(1)}+c_2\left(\left(H_k^{(1)}\right)^2-H_k^{(2)}\right).
\end{align*}
for all $y\in \RR$.
 If $c_2=0$ then non-negativity of $g_k(y)$ is equivalent to $c_0\geq 0,c_1=0$. Also for $c_2<0$ clearly $g_k(y)$ will become negative for large values of $y$, hence we assume from now on $c_2>0$.
 
 Non-negativity of a degree two polynomial can be read off from its discriminant.
This means that the polynomial $g_k(y)$ is non-negative whenever the following inequality holds \begin{align*}
    0\geq\disc(g_k)&=\left(c_1- 2 c_2 H_k^{(1)}\right)^2-4c_2 \left(c_0-c_1H_k^{(1)}+c_2\left(\left(H_k^{(1)}\right)^2-H_k^{(2)}\right)\right) \\
    &=c^2_1- 4c_0 c_2+ 4c_2^2 H_k^{(2)}.
\end{align*}
Interestingly, we see that the numbers $H_k^{(1)}$, which diverge as $k$ tends to infinity, cancel out. The above expression is monotonously increasing in $k$ and converges to \begin{align*}
   \disc(g_\infty):=c_1^2-4c_0c_2+4c_2^2\zeta(2)=c_1^2-4c_0c_2+\frac{2c_2^2\pi^2}{3}.
\end{align*}
This means that $g_k(y)$ is non-negative in $y$ for every $k$ if and only if \begin{align*}
    \disc(g_\infty)=c_1^2-4c_0c_2+\frac{2c_2^2\pi^2}{3}\leq 0.
\end{align*} 
We therefore reduced checking complete monotonicity within the space $\F_{1,2}$ to checking a single inequality among the coefficients $c_0,c_1,c_2$. The monotonicity of the discriminant in $k$ is visualized in  Figure \ref{fig:ShrinkCone2}.

\begin{figure}
    \centering
         \includegraphics[width=0.7\textwidth]{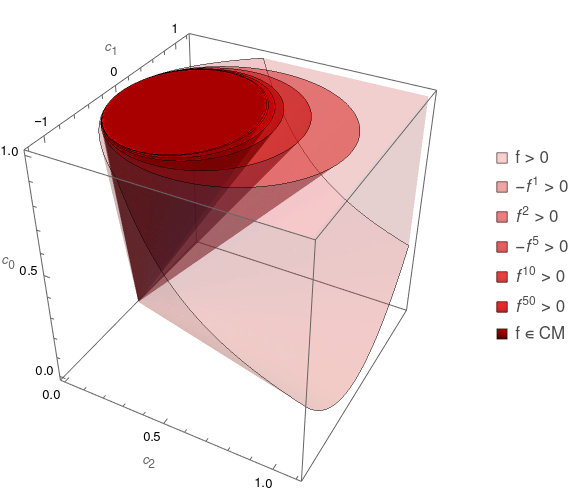}
         \caption{Outer approximations of the complete monotonicity cone in $c_0,c_1,c_2$ for the family of functions $f(x)=\frac{c_2 \log(x)^2+ c_1 \log(x)+c_0}{x}$. It shows the nested structure for the area of nonnegativity of signed derivatives of $f(x)$. The CM region in parameter space is the intersection of all of these regions.}
         \label{fig:ShrinkCone2}
\end{figure}
\begin{rem}
    The sequence of polynomials $g_k(y)$ does not converge coefficientwise for $k\to \infty$, yet the sequence of discriminants does converge. We can explain this by noting that the discriminant is invariant under a shift of the variable $y\to y+a, a \in \RR$. Indeed choosing $a=H_k^{(1)}$ we get \begin{align*}
\widetilde{g_k}:=g_k(y+H_k^{(1)})=c_2y^2+c_1y+c_0-c_2H_k^{(2)}.
    \end{align*}
    Under this shift we therefore see that the divergent part $H_k^{(1)}$ already formally cancels out. The sequence of polynomials $\widetilde{g_k}$ converges to \begin{align*}
        \widetilde{g_\infty}:=\lim_{k\to \infty}\widetilde{g_k}=c_2y^2+c_1y+c_0-c_2\frac{\pi^2}{6}.
    \end{align*}
    Indeed we see $\disc(g_\infty)=\disc(\widetilde{g_\infty})$.
\end{rem}

\begin{rem}
\label{rem:shrinking}
We find a phenomenon of contraction of the region of non-negativity of the discriminant of $\widetilde{g_k}$ in Figure \ref{fig:ShrinkCone2}, as $k$ grows. This is due to the negativity of the difference
\begin{equation*}
    \disc(g_{k-1})- \disc(g_k)=-4c_2^2/k^2<0
\end{equation*}
From this point of view, we can confirm that the CM region of $f(x)$ is exactly the non-positive region of $\disc(\widetilde{g_\infty})$.
\end{rem}

The polynomial $\widetilde{g_\infty}$, whose non-negativity certifies complete monotonicity, is also obtained in another way. Let us denote the \emph{Laplace transform} of a (non-negative) Borel measure $\alpha$ supported on $\RR_{\geq 0}^s$ by \begin{align*}
    \L(\alpha):\RR_{>0}^s\to \RR, x \mapsto \int_{\RR_{\geq0}^s} \exp\left(-\sum_{i=1}^sx_it_i\right)\dd\alpha(\mathbf{t}).
\end{align*}

The following important Theorem was proven by Bernstein, Hausdorff and Widder in the univariate case $s=1$ \cite[Theorem 12a]{Widder} and generalized to the multivariate setting by Choquet \cite[Theorem 10]{choquet}.
\begin{thm}[Bernstein-Hausdorff-Widder, Choquet]
\label{thm:HBWC}
    A function $f:\RR_{>0}^s\to \RR$ is completely monotone if and only if $f=\L(\alpha)$ for a non-negative Borel measure $\alpha$ supported on $\RR_{\geq 0}^s$.
\end{thm}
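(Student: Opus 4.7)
The plan is to handle both directions, starting with the easy one. If $f=\L(\alpha)$ for a non-negative Borel measure $\alpha$ on $\RR_{>0}^s$, then differentiation under the integral sign gives, for any $\beta\in \NN_0^s$,
\begin{align*}
\prod_{i=1}^s\left(\frac{-\partial}{\partial x_i}\right)^{\beta_i} f(x) = \int_{\RR_{>0}^s}\left(\prod_{i=1}^s t_i^{\beta_i}\right)\exp\left(-\sum_{i=1}^s x_i t_i\right)\dd\alpha(t) \geq 0.
\end{align*}
The interchange of derivative and integral is justified by dominated convergence using the exponential decay of the integrand locally uniformly in $x$, and the integrand is manifestly non-negative since $t_i>0$ on the support of $\alpha$.

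For the converse in the univariate case $s=1$, I would follow the classical Post--Widder approach. Given completely monotone $f$, the functions
\begin{align*}
\phi_n(t) := \frac{(-1)^n}{n!}\left(\frac{n}{t}\right)^{n+1} f^{(n)}\!\left(\frac{n}{t}\right)
\end{align*}
are non-negative on $\RR_{>0}$ by complete monotonicity. One shows the measures $\phi_n(t)\dd t$ have uniformly bounded total mass (controlled by $\lim_{x\to 0^+}f(x)$, possibly after truncation), so Helly's selection theorem extracts a subsequence converging weakly to a non-negative Borel measure $\alpha$. A saddle-point argument, using that the kernel $\tfrac{n^{n+1}}{n!}t^n e^{-nt}$ concentrates around $t=1$ as $n\to\infty$, then verifies $\L(\alpha)=f$.

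For the multivariate case, I would invoke Choquet's integral representation theorem. The cone $\CM_s$ is closed under pointwise (hence locally uniform) limits, and intersecting with the affine slice $\{f:f(\mathbf{1})=1\}$ produces a compact convex metrizable set $K$ in the topology of local uniform convergence; metrizability comes from separability of $\RR_{>0}^s$ and compactness from the monotone derivative bounds forced by complete monotonicity. Choquet's theorem then expresses every point of $K$ as the barycenter of a probability measure on the extreme points of $K$. To finish, one identifies the extreme points as the normalized exponentials $x\mapsto \exp\bigl(-\sum_i t_i(x_i-1)\bigr)$ for $t\in \RR_{\geq 0}^s$; the representing measure on extreme points then unpacks into a Laplace transform representation of $f$.

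The hard part is characterizing the extreme rays of $\CM_s$. One direction is immediate, but showing that exponentials are extreme uses injectivity of the multivariate Laplace transform, which in turn relies on a density argument (Stone--Weierstrass, or iterated one-variable inversion) to conclude that distinct non-negative measures have distinct Laplace transforms. A secondary difficulty is verifying the topological hypotheses of Choquet's theorem; complete monotonicity makes all partial derivatives alternating in sign, which gives the required equicontinuity on compact subsets of $\RR_{>0}^s$ for free once one controls $f$ at a single point.
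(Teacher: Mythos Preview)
The paper does not prove this theorem; it is stated as a classical result with references to \cite[Theorem 12a]{Widder} for $s=1$ and to \cite[Theorem 10]{choquet} for the multivariate case, and then used as a black box. So there is nothing in the paper to compare your argument against.

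That said, your sketch is a faithful outline of precisely those two classical proofs: Post--Widder inversion for $s=1$ and Choquet's integral representation on the cone $\CM_s$ for general $s$. Two places would need tightening in a full write-up. First, the support of the representing measure is really $\RR_{\geq 0}^s$, not $\RR_{>0}^s$ (the constant function $1$ is completely monotone and corresponds to a Dirac mass at the origin); you noticed this implicitly when you parametrized the extreme rays by $t\in\RR_{\geq 0}^s$, and the paper's statement is slightly imprecise here. Second, the compactness of the cap $\{f\in\CM_s:f(\mathbf{1})=1\}$ and the identification of its extreme points are the genuine work; your remarks about equicontinuity from the sign-alternating derivative bounds and about Laplace-transform injectivity point at the right ingredients, but each of these is a lemma in its own right rather than a one-liner.
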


 In all of the cases studied in this article the measure $\alpha$ is absolutely continuous with respect to the Lebesque measure. This means that we find a continuous function $q$ on $\RR_{>0}^s$ such that $\dd \alpha(\mathbf{t})=q(\mathbf{t})\dd \mathbf{t}$ for $\dd t$ the Lebesque measure restricted to $\RR^s_{> 0}$. We will therefore consider the operator $\L$ as a map between continuous functions on the positive orthant, completely forgetting about measures. The theorem above then states that checking complete monotonicity comes down to checking non-negativity of $q$. In theory the function $q$  can be computed as the inverse Laplace transform $\L^{-1}(f)$. However, in practice computing the inverse Laplace transform can be very hard, especially when $f$ involves parameters like $c_0,c_1,c_2$.

For $n=2,s=1$ and $f(x)=\frac{c_2\log(x)^2+c_1\log(x)+c_0}{x}$, using the software Wolfram Mathematica \cite{Mathematica}, the inverse Laplace transform of $f$ is symbolically computable. The result is \begin{align*}
    \L^{-1}(f)(t)=c_2(\log(t)+\gamma)^2-c_1(\log(t)+\gamma)+c_0-c_2\frac{\pi^2}{6} =\widetilde{g_\infty}(-\log(t)-\gamma)
\end{align*}
where $\gamma\coloneqq\lim_{n\to \infty}(H_n^{(1)}-\log(n))$ is the Euler-Mascheroni constant.
Again we see that this function is non-negative on $\RR_{>0}$ if and only if $\widetilde{g_\infty}$ is a non-negative polynomial. This confirms our result from the first approach.

This simple example gives us a hint for identifying $\widetilde{g_\infty}(-\log(x)-\gamma)$ and $\L^{-1}(f)(x)$ for general $n\in \NN_0$, see Section \ref{sec:infinite_order_derivative}. While for the $n=2$ function family, we can describe non-negativity of $\widetilde{g_\infty}$ completely by discriminants, we will see that for higher $n$, the description of the CM region is more complicated.
\section{Positivity of the inverse Laplace transform}
\label{sec:main_result}
In the upcoming section we will generalize the second approach from the previous example to arbitrary $s,n$. To do so we need to compute the inverse Laplace transform $\L^{-1}(f)$ of any function $f\in \F_{s,n}$. We focus first on the univariate case $s=1$. Due to the increased complexity of the appearing integrals we were unable to use software in the same way as we did for the quadratic case for $n>2$. However, looking at the degree two case one can expect the operator $\L^{-1}$ to send $f$ to a polynomial in logarithms. This motivates the following definition and proposition. From now on we fix $n\in \NN_0$.
\begin{defi}
    Let $\G_{1,n}$ denote the $(n+1)$-dimensional subspace of $\C^\infty(\RR_{>0},\RR)$ which is given by \begin{align*}
        \G_{1,n}\coloneqq\{g\in \C^{\infty}(\RR_{>0},\RR)\mid \exists p\in \RR[y]_n: \forall x \in \RR_{>0}: g(x)=p(\log(x))\}.
    \end{align*}
    We fix two natural bases for $\F_{1,n}$ and $\G_{1,n}$ given by \begin{align*}
        \B_\F&\coloneqq\left(\left[x\mapsto \frac{\log(x)^j}{x}\right] \right)_{j=0,\ldots,n}\\
        \B_\G&\coloneqq\left(\left[x\mapsto \log(x)^j\right] \right)_{j=0,\ldots,n}.
    \end{align*}
\end{defi}
Recall the notation \begin{align*}
    g_l\coloneqq\left.\left(\frac{\dd }{\dd z}\right)^l\Gamma(z)\right|_{z=1}.
\end{align*}
\begin{prop}
\label{prop:Rourous_prop}
    The Laplace transform restricts to an isomorphism of finite dimensional $\RR$-vector spaces \begin{align*}
        \L|_{\G_{1,n}}:\G_{1,n}\to \F_{1,n}.
    \end{align*}
    The representing matrix $A=(A_{m,k})_{0\leq m,k\leq n}$ with respect to the two bases $\B_\G,\B_\F$ is triangular and is defined by \begin{align*}
        A_{m,k}=\begin{cases}
            (-1)^m\binom{k}{m} g_{k-m} &\text{if }k\geq m \\
            0 & \text{else}
        \end{cases}.
    \end{align*}
\end{prop}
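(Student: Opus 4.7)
The plan is to directly compute $\L$ applied to each basis element of $\B_\G$ and read off the matrix entries. So I would fix $k\in\{0,\ldots,n\}$ and evaluate
\begin{align*}
    \L(\log(\,\cdot\,)^k)(x)=\int_0^\infty e^{-xt}\log(t)^k\,\dd t.
\end{align*}
The substitution $u=xt$ converts this to $\frac{1}{x}\int_0^\infty e^{-u}(\log u-\log x)^k\,\dd u$, and binomial expansion of $(\log u-\log x)^k$ then yields
\begin{align*}
    \L(\log(\,\cdot\,)^k)(x)=\frac{1}{x}\sum_{m=0}^k\binom{k}{m}(-\log x)^m\int_0^\infty e^{-u}(\log u)^{k-m}\,\dd u.
\end{align*}

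The next step is to identify the inner integral with $g_{k-m}$. Differentiating the defining integral $\Gamma(z)=\int_0^\infty e^{-u}u^{z-1}\dd u$ under the integral sign $j$ times in $z$ and evaluating at $z=1$ gives $g_j=\int_0^\infty e^{-u}(\log u)^j\,\dd u$; the differentiation under the integral sign is justified by the rapid decay $e^{-u}$ together with the standard polynomial-vs.-logarithmic growth bounds on $[1,\infty)$ and the integrability of $(\log u)^j$ near $0$ (a brief dominated-convergence argument). Substituting back,
\begin{align*}
    \L(\log(\,\cdot\,)^k)(x)=\sum_{m=0}^k(-1)^m\binom{k}{m}g_{k-m}\,\frac{\log(x)^m}{x}=\sum_{m=0}^kA_{m,k}\,\frac{\log(x)^m}{x},
\end{align*}
which is exactly the $k$-th column of the claimed matrix $A$ expressed in the basis $\B_\F$. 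In particular $\L$ sends $\G_{1,n}$ into $\F_{1,n}$, and the matrix $A$ is upper triangular with $A_{m,k}=0$ for $m>k$.

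Finally, to see that $\L|_{\G_{1,n}}$ is an isomorphism it suffices to observe that the diagonal entries $A_{k,k}=(-1)^k g_0=(-1)^k\Gamma(1)=(-1)^k$ are all nonzero, so $A$ is invertible and hence $\L|_{\G_{1,n}}:\G_{1,n}\to\F_{1,n}$ is a linear isomorphism between two $(n+1)$-dimensional spaces. The only genuinely delicate step is the identification of the integral $\int_0^\infty e^{-u}(\log u)^j\dd u$ with $g_j$; once the exchange of derivative and integral is legitimized, everything else is a clean bookkeeping exercise with the binomial theorem.
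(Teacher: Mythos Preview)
Your proof is correct and follows essentially the same route as the paper: the substitution $u=xt$, binomial expansion of $(\log u-\log x)^k$, and identification of $\int_0^\infty e^{-u}(\log u)^j\,\dd u$ with $g_j$ via differentiation under the integral sign are exactly the steps the paper takes. Your explicit remark that the diagonal entries $A_{k,k}=(-1)^k$ are nonzero, hence $A$ is invertible, is a small addition that the paper leaves implicit.
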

\begin{proof}
    It suffices to show that $\L$ maps the function $\log^k:=[x\mapsto \log(x)^k]$ to the function \begin{align*}
        \left[x\mapsto \sum_{m=0}^kA_{m,k}\frac{\log(x)^m}{x}\right].
    \end{align*}
    Indeed, we compute for any $x \in \RR_{>0}$,
    \begin{align*}
        \L(\log^k)(x)&=\int_{0}^\infty \log(t)^k\exp(-tx)\dd t.
    \end{align*}
    We apply the substitution $z=tx$ to obtain \begin{align*}
        \L(\log^k)(x)&=\int_{0}^\infty\frac{\log\left(\frac{z}{x}\right)^k}{x}\exp(-z)\dd z \\
        &=\int_{0}^\infty\frac{(\log(z)-\log(x))^k}{x}\exp(-z)\dd z \\
        &= \sum_{m=0}^k(-1)^{m}\binom{k}{m}\frac{\log(x)^m}{x}\int_0^\infty\log(z)^{k-m}\exp(-z)\dd z.
    \end{align*}
    We therefore need to show for any $m\leq k$ that \begin{align*}
        \int_0^\infty \log(z)^{k-m}\exp(-z)\dd z=g_{k-m}.
    \end{align*}
    Indeed, we compute
    \begin{align*}
        g_{k-m} &=\left.\left(\frac{\dd}{\dd y}\right)^{k-m}\Gamma(y)\right|_{y=1}\\
        &= \left.\left(\frac{\dd}{\dd y}\right)^{k-m}\int_0^\infty z^{y-1}\exp(-z)\dd z\right|_{y=1}\\
        &=\left.\int_0^\infty\left(\frac{\dd}{\dd y}\right)^{k-m} \exp(\log(z^{y-1}))\exp(-z)\dd z\right|_{y=1}\\
        &=\left.\int_0^\infty\left(\frac{\dd}{\dd y}\right)^{k-m} \exp((y-1)\log(z))\exp(-z)\dd z\right|_{y=1} \\
        &= \left.\int_0^\infty\log(z)^{k-m} \exp((y-1)\log(z))\exp(-z)\dd z\right|_{y=1} \\
        &= \left.\int_0^\infty\log(z)^{k-m} z^{y-1}\exp(-z)\dd z\right|_{y=1} \\
        &=\int_0^\infty \log(z)^{k-m}\exp(-z) \dd z,
    \end{align*}
    where we differentiate under the integral sign.
\end{proof}
Combining Theorem \ref{thm:HBWC} with Proposition \ref{prop:Rourous_prop} we obtain Theorem \ref{thm:intro_main_thm} which we restate here.
\begin{thm}

        Let $p=\sum_{i=0}^nc_iy^i\in \RR[y]_n$ and let $f:x \mapsto \frac{p(\log(x))}{x}$ be the corresponding function in $\F_{1,n}$. Write $c=(c_0,\ldots,c_n)^T$ and $Y=(1, y,\ldots,y^n)^T$, and let $A\in \RR^{(n+1)\times (n+1)}$ be the matrix from Proposition \ref{prop:Rourous_prop}. Then $f$ is completely monotone if and only if \begin{align*}
            Y^TA^{-1}c\in \Sigma_{1,n}.
        \end{align*}
\end{thm}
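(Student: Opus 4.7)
The plan is to deduce the theorem by combining the Bernstein--Hausdorff--Widder Theorem \ref{thm:HBWC} with the Laplace transform isomorphism from Proposition \ref{prop:Rourous_prop}. The strategy is: produce an explicit candidate for the inverse Laplace transform of $f$ using the isomorphism $\L|_{\G_{1,n}}$, use uniqueness of Laplace transforms to identify this candidate with the measure appearing in Bernstein--Hausdorff--Widder, and then translate non-negativity of the candidate into a statement about the polynomial $q$ of total degree at most $n$.

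First I would apply Proposition \ref{prop:Rourous_prop}. Since $\L|_{\G_{1,n}}:\G_{1,n}\to \F_{1,n}$ is a linear isomorphism whose matrix in the bases $\B_\G,\B_\F$ is $A$, there is a unique $g\in \G_{1,n}$ with $\L(g)=f$. Writing $g(t)=q(\log(t))$ for some $q\in \RR[y]_n$, the coefficient vector of $q$ is obtained by applying $A^{-1}$ to the coefficient vector of $p$. With the reverse-order conventions of the theorem statement this reads precisely $q(y)=Y^T A^{-1} c$.

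Next I would invoke Theorem \ref{thm:HBWC}: $f$ is completely monotone if and only if $f=\L(\alpha)$ for some non-negative Borel measure $\alpha$ on $\RR_{>0}$. The signed measure $g(t)\,\dd t$ satisfies $\L(g(t)\,\dd t)=f$ by construction, so by classical injectivity of the Laplace transform on (signed) Borel measures with well-defined Laplace transform on $\RR_{>0}$, the measure $\alpha$ must equal $g(t)\,\dd t$. Since $g$ is continuous on $\RR_{>0}$, the measure is non-negative if and only if $g(t)\geq 0$ for all $t>0$. Because $\log:\RR_{>0}\to \RR$ is a bijection, this is equivalent to $q(y)\geq 0$ for all $y\in \RR$, i.e. $q\in \Psd_{1,n}$. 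Finally, by Hilbert's classical theorem on non-negative univariate polynomials, $\Psd_{1,n}=\Sigma_{1,n}$, which gives the desired equivalence with $Y^T A^{-1}c\in \Sigma_{1,n}$.

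The main obstacle I expect lies in the second step, namely justifying that the abstract non-negative measure $\alpha$ furnished by Bernstein--Hausdorff--Widder must coincide with the candidate $g(t)\,\dd t$. This requires an injectivity statement for the Laplace transform valid on a class of signed measures broad enough to contain both $\alpha$ and $g(t)\,\dd t$; the cleanest version is that two such measures with Laplace transforms agreeing on an open subinterval of $\RR_{>0}$ must coincide, which follows from analyticity of Laplace transforms together with standard measure-uniqueness. All remaining steps---computing the matrix $A^{-1}$ acts correctly in the chosen bases, passing from non-negativity on $\RR_{>0}$ to non-negativity on $\RR$ via $\log$, and invoking Hilbert's theorem---are routine.
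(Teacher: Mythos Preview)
Your proposal is correct and follows essentially the same route as the paper: invert $\L|_{\G_{1,n}}$ via Proposition \ref{prop:Rourous_prop} to obtain $q(y)=Y^TA^{-1}c$, apply Theorem \ref{thm:HBWC}, pass through the surjectivity of $\log$, and finish with $\Psd_{1,n}=\Sigma_{1,n}$. The only difference is that you make explicit the uniqueness-of-Laplace-transform step needed to identify the abstract measure $\alpha$ with $g(t)\,\dd t$, whereas the paper absorbs this into the remark following Theorem \ref{thm:HBWC} that it treats $\L$ as a map between continuous functions.
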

\begin{proof}
    Substituting $y=\log(x)$ in $\sum_{k=0}^n(A^{-1}c)_ky^k$, by Proposition \ref{prop:Rourous_prop} gives $\L^{-1}(f)$. Since the logarithm is surjective, $\sum_{k=0}^n(A^{-1}c)_ky^k$ is non-negative for every $y$ if and only if $\L^{-1}(f)$ is nonnegative, which by Theorem \ref{thm:HBWC} happens precisely for completely monotone $f$. Since a univariate polynomial is non-negative if and only if it is a sum of squares, this concludes the proof.
\end{proof}

As an immediate corollary we obtain that the cone $\CM_{1,n}$ is a semi-algebraic set. This means that the set $\CM_{1,n}$ is the solution to finitely many polynomial inequalities in the coordinates with respect to the basis $\B_\F$.

\begin{cor}
\label{cor:semialg}
    The cone $\CM\cap \F_{1,n}\subseteq \F_{1,n}\cong \RR^{n+1}$ is semi-algebraic. 
\end{cor}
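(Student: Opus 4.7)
The plan is to deduce this corollary directly from Theorem \ref{thm:intro_main_thm}. That theorem identifies $\CM_{1,n}$ with the image under the linear isomorphism $\varphi_A : \RR[y]_n \to \F_{1,n}$ of the cone $\Sigma_{1,n} = \Psd_{1,n}$ of non-negative univariate polynomials of degree at most $n$. Since $\varphi_A$ is a linear bijection between finite-dimensional real vector spaces, and linear maps are in particular polynomial maps with polynomial inverses, the Tarski--Seidenberg theorem guarantees that $\varphi_A$ maps semi-algebraic sets to semi-algebraic sets. Hence, once we know $\Sigma_{1,n} \subseteq \RR[y]_n \cong \RR^{n+1}$ is semi-algebraic, its image $\CM_{1,n} \subseteq \F_{1,n} \cong \RR^{n+1}$ (where the last isomorphism uses the basis $\B_\F$) is too.

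So the task reduces to verifying that $\Sigma_{1,n}$ itself is semi-algebraic in coefficient space. First I would note that by definition $\Sigma_{1,n} = \{p \in \RR[y]_n \mid \forall y \in \RR,\; p(y)\geq 0\}$, which as the complement of the projection of the semi-algebraic set $\{(p,y) \mid p(y) < 0\}$ onto coefficient coordinates is semi-algebraic by Tarski--Seidenberg. Alternatively, and more explicitly, one can invoke the classical Hamburger-style description: a univariate polynomial of degree at most $n$ is non-negative on $\RR$ if and only if it admits a sum-of-squares decomposition, which in turn is equivalent to positive semidefiniteness of an associated Gram/Hankel matrix whose entries are linear in the coefficients of $p$. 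Positive semidefiniteness is encoded by the non-negativity of all principal minors, giving an explicit finite list of polynomial inequalities.

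The main obstacle here is essentially cosmetic: there is no substantive step beyond citing Theorem \ref{thm:intro_main_thm} and a standard fact about $\Sigma_{1,n}$. The only mild care needed is to spell out that the identification $\F_n \cong \RR^{n+1}$ alluded to in the corollary statement is precisely the one given by the basis $\B_\F$, so that the semi-algebraic inequalities cutting out $\Sigma_{1,n}$ in coefficient space transport, via $\varphi_A^{-1}$, to polynomial inequalities in the coordinates $(c_0,\ldots,c_n)$ of a function $f \in \F_{1,n}$. This gives an explicit semi-algebraic description of $\CM_{1,n}$ and completes the proof.
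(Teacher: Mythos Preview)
Your proposal is correct and follows essentially the same route as the paper: show $\Sigma_{1,n}$ is semi-algebraic as the complement of a projection via Tarski--Seidenberg, and then transport this along the linear isomorphism $\varphi_A$ from Theorem~\ref{thm:intro_main_thm}. The paper omits your alternative Gram-matrix remark but otherwise the arguments coincide.
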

\begin{proof}
    The cone $\Sigma_{1,n}=\Psd_{1,n}$ is the complement of the projection of the semi-algebraic set \begin{align*}
        \left\{(\mathbf{c},y)\in \RR^{n+1}\times \RR \middle| \sum_{i=0}^nc_iy^i< 0\right\}
    \end{align*}
    onto the first $n+1$ factors. By the Tarski-Seidenberg-Theorem \cite[Proposition 5.2.1]{TSP}, this implies that $\Sigma_{1,n}$ is semi-algebraic. Since vector-space isomorphisms preserve this property, also $A\Sigma_{1,n}=\CM\cap \F_{1,n}$ is semi-algebraic.
\end{proof}

The matrix $A$ can easily be inverted due to its triangular shape. To write down explicit formulas for its entries we recall the notion of Bell-polynomials.

\begin{defi}
\label{def:bell}
    Let $\mathbf{x}=(x_n)_{n \in \NN}$ be a sequence. The \emph{$k$-th exponential complete Bell-polynomial} in $\mathbf{x}$ is defined as \begin{align*}
        B_k(\mathbf{x}):=k![t^k]\exp\left(\sum_{j=1}^\infty x_j\frac{t^j}{j!}\right).
    \end{align*}
\end{defi}
The entries of $\mathbf{x}$ can be viewed as formal variables, they just have to belong to any $\mathbb{Q}$-algebra for the definition to make sense. We refer to \cite[Section 3.3]{Comtet} for details on Bell-polynomials.\footnote{The reference uses partial Bell-polynomials, which result in our definition when summing over one of the two indices in \cite{Comtet}. All of the results that we use for ordinary Bell-polynomials follow directly from their respective versions for partial Bell-polynomials in Comtet's book.}
The following properties follow easily from this definition.
\begin{lemma}
\label{lem:easy_properties_bell}
    Let $\mathbf{x}=(x_n)_{n \in \NN}$ and $\mathbf{z}=(z_n)_{n \in \NN}$ be sequences and let $\mathbf{y}=(\alpha^nx_n)_{n\in \NN}$ for some $\alpha \in \RR$. \begin{enumerate}[label=(\alph*)]
        \item $B_k(\mathbf{y})=\alpha^kB_k(\mathbf{x})$
        \item $B_k(\mathbf{x}+\mathbf{z})=\sum_{m=0}^k\binom{k}{m}B_{k-m}(\mathbf{x})B_m(\mathbf{z})$
        \item  $B_n(\mathbf{x})=\sum_{j=0}^{n-1}\binom{n-1}{j}B_{n-1-j}(\mathbf{x})x_{j+1}$
    \end{enumerate}
\end{lemma}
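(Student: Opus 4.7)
The plan is to prove all three identities directly from the generating function definition $B_k(x) = k! [t^k] F_x(t)$ where $F_x(t) := \exp\bigl(\sum_{j\geq 1} x_j t^j/j!\bigr)$. (I read the $n!$ in the displayed definition as a typo for $k!$, since otherwise the statement does not typecheck.) Each of (a)--(c) then reduces to a short manipulation of $F_x(t)$ and a coefficient extraction.

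For part (a), I would simply substitute $y_j = \alpha^j x_j$ into the exponent and observe that $\sum_j \alpha^j x_j t^j/j! = \sum_j x_j (\alpha t)^j/j!$, so $F_y(t) = F_x(\alpha t)$. Extracting the coefficient of $t^k$ picks up a factor $\alpha^k$, giving $B_k(y) = \alpha^k B_k(x)$. For part (b), the key observation is $F_{x+z}(t) = F_x(t) F_z(t)$ because the exponent splits additively. The claim then follows from the Cauchy product formula: $k![t^k](F_x F_z) = k!\sum_{m=0}^k [t^{k-m}]F_x \cdot [t^m] F_z = \sum_{m=0}^k \binom{k}{m} B_{k-m}(x) B_m(z)$, where the binomial coefficient absorbs the $k!/((k-m)!\,m!)$ bookkeeping.

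For part (c), I plan to differentiate $F_x$ with respect to $t$. The chain rule gives $F_x'(t) = F_x(t) \cdot \sum_{j\geq 0} x_{j+1} t^j/j!$. Since $[t^{n-1}] F_x'(t) = n\,[t^n] F_x(t)$, extracting the $(n-1)$-st coefficient of both sides and again applying the Cauchy product yields
\begin{align*}
n \cdot \tfrac{B_n(x)}{n!} = \sum_{j=0}^{n-1} \tfrac{B_{n-1-j}(x)}{(n-1-j)!}\cdot \tfrac{x_{j+1}}{j!},
\end{align*}
which after multiplying by $(n-1)!$ is exactly the stated recursion.

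None of the three steps presents a real obstacle; the only thing to be careful about is the factorial bookkeeping that converts coefficient identities into the binomial-coefficient form stated in the lemma. If anything, the mildest subtlety is justifying the formal manipulations (differentiation, product of series) as identities in the ring of formal power series over the $\mathbb{Q}$-algebra containing the $x_j$, which is standard and can be invoked by a pointer to Comtet's treatment.
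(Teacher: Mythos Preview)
Your proof is correct and matches the paper's intent: the paper does not actually write out a proof, merely asserting that the three properties ``follow easily from this definition'' (the exponential generating function), which is precisely the route you take. Your observation that the displayed $n!$ should read $k!$ is also correct.
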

The following well-known result allows us to compute higher derivatives of the gamma function around 1 in terms of zeta values.
\begin{prop}
\label{prop:gamma-Bell}
    The $k$-th derivative of $\Gamma(z)$ at $z=1$ is given by \begin{align*}
        g_k=\left.\left(\frac{\dd}{\dd z}\right)^k\Gamma(z)\right|_{z=1}= (-1)^kB_k(\gamma,1!\zeta(2),2!\zeta(3),3!\zeta(4),\ldots).
    \end{align*}
    More formally, the Bell-polynomial on the right is evaluated in the sequence $\mathbf{x}$ with \begin{align*}
        x_n=\begin{cases}
            \gamma &\text{ if } n=1 \\
            (n-1)!\zeta(n) &\text{ else}
        \end{cases}.
    \end{align*}
\end{prop}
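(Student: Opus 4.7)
The plan is to reduce the statement to the classical Taylor expansion of $\log\Gamma(1+z)$ around $z=0$ and then read off the Bell polynomial directly from its exponential generating function. The starting point is the identity
\begin{align*}
    g_k=\left.\left(\frac{\dd}{\dd z}\right)^k\Gamma(z)\right|_{z=1}=\left.\left(\frac{\dd}{\dd u}\right)^k\Gamma(1+u)\right|_{u=0}=k!\,[u^k]\,\Gamma(1+u),
\end{align*}
which simply rewrites $g_k$ in terms of the Taylor coefficients of $\Gamma(1+u)$ at the origin.

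Next I would invoke the classical expansion
\begin{align*}
    \log\Gamma(1+u)=-\gamma u+\sum_{n=2}^{\infty}\frac{(-1)^n\zeta(n)}{n}u^n,
\end{align*}
which follows from the Weierstrass product for $\Gamma$ (and is essentially where the zeta values enter). To align the signs with the statement, I would substitute $u=-t$, so that $\Gamma(1-t)=\exp\!\left(\gamma t+\sum_{n\geq 2}\frac{\zeta(n)}{n}t^n\right)$ and
\begin{align*}
    g_k=k!\,[u^k]\,\Gamma(1+u)=(-1)^k\,k!\,[t^k]\,\Gamma(1-t)=(-1)^k\,k!\,[t^k]\exp\!\left(\gamma t+\sum_{n=2}^{\infty}\frac{\zeta(n)}{n}t^n\right).
\end{align*}

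The final step is to recognize the right-hand exponent as the series $\sum_{n\geq 1}x_n \frac{t^n}{n!}$ used in the definition of $B_k$, for the specific sequence $x_1=\gamma$ and $x_n=(n-1)!\,\zeta(n)$ for $n\geq 2$. Indeed $\gamma t=x_1\frac{t^1}{1!}$ and $\frac{\zeta(n)}{n}t^n=(n-1)!\zeta(n)\cdot\frac{t^n}{n!}$. Applying the definition $B_k(x)=k!\,[t^k]\exp\!\left(\sum_{j\geq 1}x_j\frac{t^j}{j!}\right)$ therefore yields exactly $g_k=(-1)^k B_k(\gamma,1!\zeta(2),2!\zeta(3),\ldots)$.

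The only real obstacle is the sign bookkeeping: one has to track that the generating-function convention of the Bell polynomial has the sequence entries with \emph{positive} coefficients, while the native expansion of $\log\Gamma(1+z)$ has alternating signs $(-1)^n\zeta(n)/n$. The substitution $u\mapsto -t$ resolves this and simultaneously produces the overall factor $(-1)^k$. Everything else is a direct comparison of power series, and an even shorter proof is to just cite \cite{gamma_fct} as the excerpt already indicates.
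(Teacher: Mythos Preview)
Your argument is correct. The paper itself does not prove this proposition at all: it simply states the result and attributes it to \cite[page 30]{gamma_fct}. You go further and supply a clean self-contained proof via the classical expansion $\log\Gamma(1+u)=-\gamma u+\sum_{n\ge 2}\frac{(-1)^n\zeta(n)}{n}u^n$, the sign-fixing substitution $u\mapsto -t$, and a direct comparison with the exponential generating function that defines $B_k$. Each step checks out, including the sign bookkeeping, and your closing remark already anticipates that the paper takes the citation route.
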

\begin{proof}
    In \cite[Theorem 10.6.1]{boros2004expansion} the authors prove a series expansion of the logarithm of the gamma function around 1, namely \begin{align*}
        \log(\Gamma(1+z))=-\gamma z +\sum_{k=2}^\infty\frac{(-1)^k\zeta(k)}{k}z^k
    \end{align*}
    for $|z|<1$. Using the sequence $x_n$ we can rewrite this as \begin{align*}
\log(\Gamma(1+z))=\sum_{k=1}^\infty(-1)^kx_k\frac{z^k}{k!}
    \end{align*}
    and taking the exponential on both sides we obtain \begin{align*}
\Gamma(1+z)=\exp\left(\sum_{k=1}^\infty(-1)^kx_k\frac{z^k}{k!}\right).
    \end{align*}
    Taking the $k$-th derivative and evaluating at $z=0$ up to the factor $k!$ simply extracts the $z^k$ coefficient in the series on the right, hence \begin{align*}
      \left.\left(\frac{\dd}{\dd z}\right)^k\Gamma(z)\right|_{z=1}=\left.\left(\frac{\dd}{\dd z}\right)^k\Gamma(1+z)\right|_{z=0}= k![z^k]\exp\left(\sum_{k=1}^\infty(-1)^kx_k\frac{z^k}{k!}\right).
    \end{align*}
    By definition the right hand side is $B_k(\mathbf{y})$ where $y_n=(-1)^nx_n,n\in \NN$. Using part (a) of Lemma \ref{lem:easy_properties_bell} this is equal to $(-1)^kB_k(\mathbf{x})$ as claimed.
\end{proof}

With these tools we can write down the inverse matrix of $A$ explicitly.

\begin{lemma}
\label{lem:Ainverse}
    Define a matrix $C=(C_{m,k})_{0\leq m,k \leq n}$ via \begin{align*}
        C_{m,k}:=\begin{cases}
            (-1)^m\binom{k}{m}B_{k-m}(-\gamma,-1!\zeta(2),-2!\zeta(3),-3!\zeta(4),\ldots) & \text{ if }k\geq m \\
            0 & \text{else}
        \end{cases}.
    \end{align*}
    Then $CA=AC=\id_{\RR^{n+1}}$ and hence $C=A^{-1}$.
\end{lemma}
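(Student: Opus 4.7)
The plan is to verify $CA = I$ (or equivalently $AC = I$) by a direct computation, exploiting the convolution identity for Bell polynomials from Lemma~\ref{lem:easy_properties_bell}(b). Since both $A$ and $C$ are upper triangular with $1$'s on the diagonal (note $B_0 = 1$, and $A_{m,m} = (-1)^m \binom{m}{m} g_0 = 1$ since $g_0 = \Gamma(1) = 1$, and similarly $C_{m,m} = 1$), it suffices to check $CA = I$; then $C$ is the unique left inverse of $A$, hence equals $A^{-1}$, and automatically $AC = I$ as well.

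First I would use Proposition~\ref{prop:gamma-Bell} to rewrite $g_{l} = (-1)^l B_l(x)$, where $x$ denotes the sequence with $x_1 = \gamma$ and $x_n = (n-1)!\zeta(n)$ for $n \geq 2$. This turns the entries of $A$ into
\[
A_{k,l} = (-1)^l \binom{l}{k} B_{l-k}(x), \qquad \text{for } l \geq k,
\]
while the entries of $C$ read $C_{m,k} = (-1)^m \binom{k}{m} B_{k-m}(-x)$ for $k \geq m$, where $-x$ is the termwise negated sequence.

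Next I would compute the matrix product, using the triangular shape to restrict the summation range $m \leq k \leq l$:
\[
(CA)_{m,l} = \sum_{k=m}^{l} (-1)^{m+l} \binom{k}{m}\binom{l}{k} B_{k-m}(-x)\, B_{l-k}(x).
\]
The identity $\binom{k}{m}\binom{l}{k} = \binom{l}{m}\binom{l-m}{k-m}$ allows me to factor out $\binom{l}{m}$, and after substituting $j = k-m$ the sum becomes
\[
(CA)_{m,l} = (-1)^{m+l} \binom{l}{m} \sum_{j=0}^{l-m} \binom{l-m}{j} B_{j}(-x)\, B_{l-m-j}(x).
\]
By Lemma~\ref{lem:easy_properties_bell}(b), the inner sum equals $B_{l-m}(x + (-x)) = B_{l-m}(\mathbf{0})$, where $\mathbf{0}$ denotes the zero sequence. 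Directly from the generating function definition one has $B_N(\mathbf{0}) = N![t^N]\exp(0) = \delta_{N,0}$, so $(CA)_{m,l}$ vanishes for $l > m$ and equals $1$ for $l = m$. This finishes the verification.

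I do not anticipate a genuine obstacle: the entire argument rests on massaging the signs via Proposition~\ref{prop:gamma-Bell} and recognizing the Vandermonde-type identity that converts the double binomial into an additive convolution, after which Lemma~\ref{lem:easy_properties_bell}(b) is tailor-made for the conclusion. The only thing to be careful about is bookkeeping of the three sign contributions $(-1)^m$, $(-1)^l$, and $(-1)^{l-k}$ (from $g_{l-k}$), which telescope to $(-1)^{m+l}$ and ultimately cancel against $(-1)^{m+l}\cdot \delta_{m,l}$ on the diagonal.
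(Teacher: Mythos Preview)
Your argument is correct and follows essentially the same route as the paper: rewrite $A$ via Proposition~\ref{prop:gamma-Bell}, use the binomial identity $\binom{k}{m}\binom{l}{k}=\binom{l}{m}\binom{l-m}{k-m}$, and collapse the sum with Lemma~\ref{lem:easy_properties_bell}(b) to $B_{l-m}(\mathbf{0})$. The only difference is that you compute $CA$ while the paper computes $AC$, and you absorb the diagonal case into the general formula via $(-1)^{m+l}B_0(\mathbf{0})=1$ rather than treating it separately.

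One small slip worth fixing: the diagonal entries are $A_{m,m}=C_{m,m}=(-1)^m$, not $1$ (you dropped the sign from $(-1)^m\binom{m}{m}g_0$). This does not affect the proof, since your subsequent computation correctly produces $(CA)_{m,m}=(-1)^{2m}=1$ anyway, and the reduction to a one-sided inverse is just the fact that square matrices have coinciding left and right inverses.
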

\begin{proof}
    First let us write the matrix $A$ using Bell-polynomials. By Proposition \ref{prop:gamma-Bell} we have for all indices $0\leq m,k\leq n$\begin{align*}
        A_{m,k}=\begin{cases}
            (-1)^k\binom{k}{m}B_{k-m}(\gamma,1!\zeta(2),2!\zeta(3),3!\zeta(4),\ldots) & \text{ if } k\geq m \\
            0 &\text{ else}
        \end{cases}.
    \end{align*}
    It suffices to show $AC=\id_{\RR^{n+1}}$ which we check for every pair of indices $m,k$. Both $A$ and $C$ are clearly triangular matrices with the diagonal $A_{m,m}=C_{m,m}=(-1)^m$ Hence their product is triangular with 1's on the diagonal. We may therefore assume $m<k$, now wishing to show $(AC)_{m,k}=0$. Set $\mathbf{x}=(\gamma,1!\zeta(2),2!\zeta(3),3!\zeta(4),\ldots)$. Then we compute \begin{align*}
(AC)_{m,k}&=\sum_{j=0}^nA_{m,j}C_{j,k} \\&= \sum_{j=m}^k (-1)^j\binom{j}{m}B_{j-m}(\mathbf{x})(-1)^j\binom{k}{j}B_{k-j}(-\mathbf{x}) \\
&=
\sum_{j=0}^{k-m}\binom{j+m}{m}\binom{k}{j+m}B_j(\mathbf{x})B_{k-m-j}(-\mathbf{x}) \\&=\sum_{j=0}^{k-m}\binom{k}{m}\binom{k-m}{j}B_j(\mathbf{x})B_{k-m-j}(-\mathbf{x}) \\&=
\binom{k}{m}B_{k-m}(\mathbf{x}-\mathbf{x}) \\&=0.
    \end{align*}
Here we used Lemma \ref{lem:easy_properties_bell} and the obvious fact that $B_l(\mathbf{0})=0$ for all $l>0$.
\end{proof}

We now generalize Theorem \ref{thm:intro_main_thm} to the multivariate case $s>1$. Recall that\begin{align*}
    T^{s,n}\varphi_A: \RR[{\bf y}]_n\to \F_{s,n}
\end{align*}  
is the linear map sending the monomial $y_1^{\lambda_1}\cdots y_s^{\lambda_s}, \sum_{i=1}^s\lambda_i\leq n$, to the function \begin{align}
\label{eq:defT}
(x_1,\ldots,x_s)\mapsto \prod_{j=1}^s\varphi_A(y_j^{\lambda_j})(x_j)\in \F_{s,n}.
\end{align}
Here $\varphi_A$ is the operator $\RR[y]_n\to \F_{1,n}$ represented by the matrix $A$ in the monomial basis and in the basis $\B_\F$.
As for the univariate case we define 
\begin{align*}
        \G_{s,n}\coloneqq\{g\in \C^{\infty}(\RR_{>0}^s,\RR)\mid \exists p\in \RR[{\bf y}]_n: \forall \mathbf{x} \in \RR_{>0}^s: g(\mathbf{x})=p(\log(x_1),\ldots,\log(x_s))\}.
\end{align*}
We identify $\G_{s,n}$ with $\RR[{\bf y}]_n$ via the natural isomorphism $\iota:\RR[{\bf y}]_n\to \G_{s,n}$ which maps a polynomial $p$ to the function $\mathbf{x}\mapsto p(\log(x_1),\ldots,\log(x_s))$.
\begin{prop}
\label{prop:Rourous_prop_multi}
    The operator $T^{s,n}\varphi_A$ agrees with the multivariate Laplace transform restricted to $\G_{s,n}$ under the isomorphism $\iota$. In other words we have \begin{align*}
\L|_{\G_{s,n}}=T^{s,n}\varphi_A\circ\iota^{-1}
    \end{align*}
    as linear maps from $\G_{s,n}$ to $\F_{s,n}$.
\end{prop}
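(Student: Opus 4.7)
The plan is to reduce this multivariate statement to the univariate case already established in Proposition \ref{prop:Rourous_prop} via the fact that both sides factor over the $s$ variables. Since $T^{s,n}\varphi_A$, $\L|_{\G_{s,n}}$, and $\iota$ are all $\RR$-linear, it suffices to verify the identity $\L(\iota(p)) = T^{s,n}\varphi_A(p)$ on a basis of $\RR[\mathbf{y}]_n$, and the natural choice is the monomial basis $\{y_1^{\lambda_1}\cdots y_s^{\lambda_s} : \sum_{i=1}^s \lambda_i \leq n\}$.

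The core observation is that for a monomial, $\iota(y_1^{\lambda_1}\cdots y_s^{\lambda_s})$ is the product $\prod_{j=1}^s \log(x_j)^{\lambda_j}$, i.e., a pure product of functions each depending on a single variable. Therefore the multivariate Laplace transform integrand $\exp(-\sum_{i=1}^s x_i t_i)\prod_{j=1}^s \log(t_j)^{\lambda_j}$ factors as a product of $s$ expressions, each in a distinct $(x_j,t_j)$ pair. The integrability of each univariate factor is guaranteed by Proposition \ref{prop:Rourous_prop} (the integrals defining the $g_l$ converge), so Fubini-Tonelli applies and I can write
\begin{align*}
    \L\bigl(\iota(y_1^{\lambda_1}\cdots y_s^{\lambda_s})\bigr)(x_1,\ldots,x_s) = \prod_{j=1}^s \int_0^\infty \log(t_j)^{\lambda_j}\exp(-x_jt_j)\,\dd t_j = \prod_{j=1}^s \L(\log^{\lambda_j})(x_j).
\end{align*}

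Now I invoke Proposition \ref{prop:Rourous_prop} on each univariate factor: $\L(\log^{\lambda_j})(x_j) = \varphi_A(y^{\lambda_j})(x_j)$. Substituting this into the product gives exactly $\prod_{j=1}^s \varphi_A(y^{\lambda_j})(x_j)$, which by the defining formula (\ref{eq:defT}) is precisely $T^{s,n}\varphi_A(y_1^{\lambda_1}\cdots y_s^{\lambda_s})$ evaluated at $(x_1,\ldots,x_s)$. This establishes the identity on monomials and hence, by linearity, on all of $\G_{s,n}$.

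The only real content beyond bookkeeping is the factorization step, which is a routine application of Fubini, and this is not really an obstacle since the integrands are products of absolutely integrable functions on the positive orthant. A minor point I would include is to note explicitly that $T^{s,n}\varphi_A$ lands in $\F_{s,n}$: since $\sum_j \lambda_j \leq n$ and each $\varphi_A(y^{\lambda_j})(x_j)$ is $\frac{1}{x_j}$ times a polynomial in $\log(x_j)$ of degree $\lambda_j$, the product is $\frac{1}{x_1\cdots x_s}$ times a polynomial in $\log(x_1),\ldots,\log(x_s)$ of total degree $\sum_j \lambda_j \leq n$, which is the definition of membership in $\F_{s,n}$.
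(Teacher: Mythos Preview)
Your proof is correct and follows essentially the same approach as the paper: reduce by linearity to monomials, factor the Laplace integrand, separate into a product of univariate integrals, and invoke Proposition~\ref{prop:Rourous_prop} on each factor. You add a bit more care than the paper (the Fubini justification and the check that the image lands in $\F_{s,n}$), but the argument is the same.
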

\begin{proof}
    Let $y_1^{\lambda_1}\cdots y_s^{\lambda_s}, \sum_{i=1}^s\lambda_i\leq n$ be a monomial in $\RR[{\bf y}]_n$. As such monomials form a basis of $\RR[{\bf y}]_n$ it suffices to show that the function (\ref{eq:defT}) is the Laplace transform of the function \begin{align*}
        \log^{\lambda}: \mathbf{x}\mapsto \prod_{j=1}^s\log(x_j)^{\lambda_j}.
    \end{align*}
    Using the univariate case (Proposition \ref{prop:Rourous_prop}) we indeed have \begin{align*}
        \L(\log^{\lambda})(\mathbf{x})&= \int_{\RR_{>0}^s}\exp\left(-\sum_{j=1}^sx_jt_j\right)\log^{\lambda}(\mathbf{t})\dd \mathbf{t}\\
        &=\int_{\RR_{>0}^s}\prod_{j=1}^s\exp\left(-x_jt_j\right)\log(t_j)^{\lambda_j}\dd \mathbf{t} \\
        &=\prod_{j=1}^s\int_{\RR_>0}\exp\left(-x_jt_j\right)\log(t_j)^{\lambda_j}\dd t_j \\
        &=\prod_{j=1}^s\varphi_A(y_j^{\lambda_j})(x_j).
    \end{align*}
\end{proof}
With the above proposition we can prove Theorem \ref{thm:multi_main_thm_intro}.
\begin{thm}
    Let $f\in \F_{s,n}$. Then $f$ is completely monotone if and only if $(T^{s,n}\varphi_A)^{-1}(f)\in \Psd_{s,n}$. In particular, $\CM_{s,n}$ is a semi-algebraic cone. 
\end{thm}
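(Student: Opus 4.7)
The plan is to mirror the proof of Theorem \ref{thm:intro_main_thm}, using the multivariate analogue Proposition \ref{prop:Rourous_prop_multi} in place of Proposition \ref{prop:Rourous_prop}. Given $f \in \F_{s,n}$, set $p := (T^{s,n}\varphi_A)^{-1}(f) \in \RR[{\bf y}]_n$ and $q := \iota(p) \in \G_{s,n}$, so that by Proposition \ref{prop:Rourous_prop_multi} we have $f = \L(q)$. The theorem then reduces to showing that $f$ is completely monotone if and only if $q$ is non-negative on $\RR_{>0}^s$. Since $q(t) = p(\log t_1, \ldots, \log t_s)$ and the componentwise logarithm is surjective onto $\RR^s$, this pointwise non-negativity of $q$ is in turn equivalent to $p \in \Psd_{s,n}$.

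For the ``if'' direction I would argue directly: if $p \in \Psd_{s,n}$ then $q \geq 0$ on $\RR_{>0}^s$, so the absolutely continuous measure $q(t)\dd t$ is a non-negative Borel measure on the positive orthant whose Laplace transform equals $f$. Theorem \ref{thm:HBWC} then yields complete monotonicity of $f$.

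For the converse, suppose $f$ is completely monotone. Theorem \ref{thm:HBWC} produces a non-negative Borel measure $\alpha$ on $\RR_{>0}^s$ with $f = \L(\alpha)$, while we also have $f = \L(q \cdot \dd t)$. By uniqueness of the multivariate Laplace transform on Borel measures whose Laplace integrals converge on the positive orthant, $\alpha = q \cdot \dd t$. Hence $q \geq 0$ almost everywhere, and by continuity of $q$ this forces $q \geq 0$ pointwise on $\RR_{>0}^s$, giving $p \in \Psd_{s,n}$.

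Finally, the ``In particular'' statement follows from the Tarski--Seidenberg argument of Corollary \ref{cor:semialg}: $\Psd_{s,n}$ is the complement in the coefficient space of the projection of the semi-algebraic set $\{(c,y) \in \RR^{\dim \RR[{\bf y}]_n} \times \RR^s \mid p_c(y) < 0\}$ onto the $c$-coordinates, hence semi-algebraic, and the linear isomorphism $T^{s,n}\varphi_A$ preserves semi-algebraicity. The step I expect to require the most care is the invocation of uniqueness of the multivariate Laplace transform; one can either cite a standard reference on Laplace transforms of Radon measures on the positive orthant, or reduce iteratively to the univariate uniqueness along each coordinate via Fubini, using that both $\alpha$ and $q \cdot \dd t$ have finite Laplace integrals throughout $\RR_{>0}^s$.
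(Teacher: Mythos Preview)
Your proposal is correct and follows essentially the same route as the paper: both combine Proposition \ref{prop:Rourous_prop_multi} (identifying $(T^{s,n}\varphi_A)^{-1}$ with $\iota^{-1}\circ\L^{-1}$), Theorem \ref{thm:HBWC}, surjectivity of the componentwise logarithm, and then Tarski--Seidenberg for semi-algebraicity. The only difference is that you spell out the uniqueness of the representing measure in the converse direction, a point the paper absorbs into the phrase ``$\L^{-1}(f)$ is non-negative'' without further comment.
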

\begin{proof}
    First notice that $T^{s,n}\varphi_A$ is actually an isomorphism and hence invertible. This follows immediately from the fact that $\varphi_A$ is an isomorphism in the univariate setting. By the Bernstein-Hausdorff-Widder-Choquet Theorem \ref{thm:HBWC} we get that $f$ is completely monotone precisely when $\L^{-1}(f)$ is non-negative on the positive orthant. Since the logarithm maps the positive halfline surjectively onto $\RR$ this happens if and only if $\iota^{-1}\circ \L^{-1}(f)\in \Psd_{s,n}$ is a non-negative polynomial. By Proposition \ref{prop:Rourous_prop_multi} we have $\iota^{-1}\circ \L^{-1}(f)=(T^{s,n}\varphi_A)^{-1}(f)$ which proves the first part. Similar to the univariate case (Corollary \ref{cor:semialg}) the Tarski-Seidenberg Theorem implies that $\CM_{s,n}$ is a semi-algebraic cone.
\end{proof}

We close this section by discussing the degree two case $n=2$ for arbitrary $s$.
\begin{ex}
    Let us consider $f\in \F_{s,2}$. Such a function can be represented by a unique symmetric matrix $D$ satisfying \begin{align*}
        f(x)=\frac{1}{x_1\ldots x_s}(\log(x_1),\ldots, \log(x_s),1)D(\log(x_1),\ldots, \log(x_s),1)^T.
    \end{align*}
    From Section \ref{sec:example_n=2} we know the univariate Laplace transforms satisfy \begin{align*}
        \L^{-1}\left(\frac{1}{x_i}\right)=1, \L^{-1}\left(\frac{\log(x_i)}{x_i}\right)=-\log(x_i)-\gamma, \L^{-1}\left(\frac{\log(x_i)^2}{x_i}\right)=(-\log(x_i)-\gamma)^2-\frac{\pi^2}{6}.
    \end{align*}
    By replacing the surjective function $-\log(x_i)-\gamma$ by a new variable $y_i$ and employing Proposition \ref{prop:Rourous_prop_multi} we hence get that \begin{align*}
        \L^{-1}(f)=(y_1,\ldots,y_s,1)\widetilde{D}(y_1,\ldots,y_s,1)^T,
    \end{align*}
    where the matrix $\widetilde{D}$ is obtained from $D$ by adding $-\frac{\pi^2}{6}\sum_{i=1}^sD_{i,i}$ to the bottom right entry of $D$. \begin{align*}
        \widetilde{D}=D+\begin{bmatrix}
0 & \cdots & 0 & 0 \\
\vdots &  & \vdots & \vdots \\
0 & \cdots & 0 & 0 \\
0 & \cdots & 0 & -\frac{\pi^2}{6}\sum_{i=1}^sD_{i,i}
\end{bmatrix} .
    \end{align*}
    The function $\L^{-1}(f)$ is non-negative precisely when the matrix $\widetilde{D}$ is positive semi-definite. Therefore the cone $\CM_{s,2}$ is the semi-algebraic set described by non-negativity of all principal minors of $\widetilde{D}$.
\end{ex}
\section{Infinite order derivatives}
\label{sec:infinite_order_derivative}
In the upcoming section we focus on the univariate setting $s=1$.
For the degree two case, in Remark \ref{rem:shrinking} we observed a convergence behavior in the regions of $\F_{1,2}$ that are cut out by the conditions $\left(\frac{-\dd}{\dd x}\right)^kf(x)\geq 0$ as $k$ tends to infinity. In fact these regions were nested, forming a descending chain with limit precisely $\CM_{1,2}$. While we can not prove the same containment and convergence for every degree $n$, we can still see that the polynomial conditions $\widetilde{g_k}\geq0$, which are equivalent to nonnegativity of the signed derivative $\left(\frac{-\dd}{\dd x}\right)^kf(x)$, do converge to a condition $\widetilde{g_\infty}\geq0$ which is equivalent to complete monotonicity of $f$.

As above we fix $n \in \NN$ and $p:=\sum_{k=0}^nc_ky^k\in \RR[c_0,\ldots,c_n][y]$. We write $f:\RR_{>0}\to \RR, x\mapsto \frac{p(\log(x))}{x}$ and $h_m(y)=\sum_{k=0}^mc(m+1,k+1)p^{(k)}(y)$. Recall from Lemma \ref{lem:derivativesoff} that  \begin{align*}
    f^{(m)}(x)=\frac{h_m(\log(x))}{x^{m+1}}.
\end{align*}
Let $g(y)\in \RR[c_0,\ldots,c_n,H_1,\ldots,H_n][y]$ be defined as \begin{align*}
    g(y)=\sum_{k=0}^n(-1)^k\frac{w(k)}{k!}p^{(k)}(y).
\end{align*}
Then by Proposition \ref{prop:w=stirling} we have \begin{align*}
    (-1)^nn!g(y)|_{H_i=H_n^{(i)}}=h_n(y).
\end{align*}
We further set \begin{align*}
    \widetilde{g}(y):=g(y+H_1).
\end{align*}
Notice that we clearly have \begin{align*}
    f \text{ is CM }&\Leftrightarrow \forall y\in \RR:\forall n\in \NN:(-1)^nh_n(y)\geq 0 \\&\Leftrightarrow \forall y\in \RR:\forall n \in \NN: g(y)|_{H_i=H^{(i)}_n}\geq 0 \\&\Leftrightarrow \forall y\in \RR:\forall n \in \NN: \widetilde{g}(y)|_{H_i=H^{(i)}_n}\geq 0.
\end{align*}
We write $g_m(y),\widetilde{g_m}(y)$ for the univariate polynomials obtained from $g(y),\widetilde{g}(y)$ by substituting $H_i=H^{(i)}_m$. 

Our next result states that $\widetilde{g}(y)$ is in fact independent of $H_1$.
\begin{prop}
\label{prop:H_1-independence}
    We have \begin{align*}
        \frac{\partial}{\partial H_1}\widetilde{g}(y) =0.
    \end{align*}
    as polynomials in $y$.
\end{prop}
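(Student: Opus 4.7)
My plan is to differentiate $\widetilde{g}(y) = g(y+H_1)$ with respect to $H_1$ via the chain rule, treating $H_1$ as both the shift parameter and a coefficient inside $g$. This yields
\[
\frac{\partial \widetilde{g}}{\partial H_1}(y) \;=\; \frac{\partial g}{\partial y}(y+H_1) \;+\; \frac{\partial g}{\partial H_1}(y+H_1).
\]
So it suffices to show the identity $\partial_y g + \partial_{H_1} g = 0$ as a polynomial in $y,H_1,\ldots,H_n$ (and then evaluate at $y+H_1$).

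Plugging in the definition $g(y)=\sum_{k=0}^n (-1)^k \frac{w(k)}{k!}p^{(k)}(y)$, using $p^{(n+1)}=0$, and re-indexing, the desired identity $\partial_y g + \partial_{H_1} g=0$ is equivalent to
\[
\sum_{k=1}^n \left[ (-1)^k \frac{\partial_{H_1} w(k)}{k!} + (-1)^{k-1}\frac{w(k-1)}{(k-1)!} \right] p^{(k)}(y) = 0,
\]
which (since the $p^{(k)}$ are linearly independent once we take $p$ generic) reduces to proving the single combinatorial identity
\[
\partial_{H_1} w(k) \;=\; k\, w(k-1) \qquad \text{for all } k \geq 1.
\]

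For this identity I would use the exponential generating function from Lemma \ref{lem:exp_generating_fct}: differentiating
\[
\sum_{k=0}^\infty \frac{w(k)}{k!} x^k \;=\; \exp\!\left(\sum_{m=1}^\infty \frac{(-1)^{m-1}H_m}{m} x^m\right)
\]
with respect to $H_1$ pulls down a factor of $x$ on the right, giving
\[
\sum_{k=0}^\infty \frac{\partial_{H_1} w(k)}{k!} x^k \;=\; x\sum_{j=0}^\infty \frac{w(j)}{j!} x^j \;=\; \sum_{k=1}^\infty \frac{w(k-1)}{(k-1)!}\, x^k,
\]
and comparing coefficients of $x^k$ gives exactly $\partial_{H_1} w(k) = k\,w(k-1)$.

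I do not anticipate a real obstacle here: the only slightly subtle point is the chain-rule bookkeeping that turns the $H_1$-independence of $\widetilde{g}$ into the pair of first-order conditions $\partial_y g + \partial_{H_1} g = 0$, but once one writes this out the identity collapses to the generating-function identity above, which is immediate. The computation also gives an intuitive reason for the phenomenon: shifting $y$ by $H_1$ precisely absorbs the only source of $H_1$-dependence in $g$, namely the term $H_1$ appearing linearly in the exponent of the generating function.
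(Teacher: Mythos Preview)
Your proof is correct and is essentially the same argument as the paper's. The paper differentiates $\widetilde{g}(y)=\sum_k (-1)^k\frac{w(k)}{k!}p^{(k)}(y+H_1)$ directly with the product rule, obtaining your two sums (one from $\partial_{H_1}w(k)$, one from $\partial_{H_1}p^{(k)}(y+H_1)=p^{(k+1)}(y+H_1)$), and then invokes the identity $\partial_{H_1}w(k)=k\,w(k-1)$, which it records separately as Lemma~\ref{lem:derivativeH1} and proves by the same generating-function differentiation you give; your chain-rule repackaging $\partial_{H_1}\widetilde{g}=\partial_y g+\partial_{H_1}g$ evaluated at $y+H_1$ is just a relabeling of those same two sums.
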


To prove this we need to understand the derivatives in $H_1$ of $w(k)$. 

\begin{lemma}
\label{lem:derivativeH1}
    For any $k \in \NN_0$ we have \begin{align*}
        \frac{\dd}{\dd H_1}w(k+1)= (k+1)w(k)
    \end{align*}
\end{lemma}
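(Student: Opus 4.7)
The plan is to use the exponential generating function supplied by Lemma~\ref{lem:exp_generating_fct}, which packages all the $w(k)$ at once and turns the derivative in $H_1$ into a trivial operation. Set
$$F(x) \;:=\; \exp\!\left(\sum_{m=1}^\infty \frac{(-1)^{m-1}H_m}{m}\,x^m\right),$$
so that by Lemma~\ref{lem:exp_generating_fct} one has $w(k)/k! = [x^k]F(x)$ for every $k\in\NN_0$.

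The key observation is that $H_1$ occurs in the exponent only in the $m=1$ summand, and there with coefficient $1$. Differentiating the exponent in $H_1$ therefore yields simply $x$, and by the chain rule $\partial F/\partial H_1 = x\,F(x)$. Extracting the coefficient of $x^{k+1}$ on both sides then gives
$$\frac{1}{(k+1)!}\,\frac{\partial w(k+1)}{\partial H_1} \;=\; [x^{k+1}]\bigl(x\,F(x)\bigr) \;=\; [x^{k}]F(x) \;=\; \frac{w(k)}{k!},$$
and multiplying through by $(k+1)!$ delivers the claimed identity $\partial w(k+1)/\partial H_1 = (k+1)\,w(k)$.

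The main (and quite minor) obstacle is formal: we are differentiating an infinite formal power series under a coefficient extraction. This needs no analytic justification, however, since for each fixed $k+1$ only the first $k+1$ summands of the exponent can contribute to $[x^{k+1}]F(x)$, so both $w(k+1)$ and its $H_1$-derivative are honest polynomials in $H_1,\dots,H_{k+1}$ and the identity is an equality of polynomials. If one wished to avoid generating functions altogether, an alternative route would be induction on $k$ directly from the recursion defining $w(k)$, but that argument requires reshuffling a double sum and obscures the underlying reason; the generating function proof makes the cancellation manifest.
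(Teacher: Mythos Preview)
Your argument is correct and is exactly the proof the paper has in mind: the paper simply says ``This follows easily from Lemma~\ref{lem:exp_generating_fct},'' and your computation is precisely that easy derivation via $\partial F/\partial H_1 = xF(x)$.
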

\begin{proof}
    This follows easily from Lemma \ref{lem:exp_generating_fct}.
\end{proof}

\begin{proof}[Proof of Proposition \ref{prop:H_1-independence}]
    Plugging in the definition of $\widetilde{g}$ and using Lemma \ref{lem:derivativeH1} we write \begin{align*}
        \frac{\partial}{\partial H_1}\widetilde{g}(y)&=\frac{\partial}{\partial H_1}\sum_{k=0}^n(-1)^k\frac{w(k)}{k!}p^{(k)}(y+H_1) \\
        &=\sum_{k=1}^n (-1)^k \frac{kw(k-1)}{k!}p^{(k)}(y+H_1)+\sum_{k=0}^n(-1)^k\frac{w(k)}{k!}p^{(k+1)}(y+H_1) \\
        &=-\sum_{k=0}^{n-1} (-1)^k \frac{w(k)}{k!}p^{(k+1)}(y+H_1)+\sum_{k=0}^n(-1)^k\frac{w(k)}{k!}p^{(k+1)}(y+H_1) \\
        &=(-1)^{n}\frac{w(n)}{n!}p^{(n+1)}(y+H_1) \\&=0.
    \end{align*}
\end{proof}
Since all of the sequences $(H_m^{(i)})_{m \in \NN}$ for $i > 1$ converge to zeta values $\zeta(i)$ we may now define \begin{align*}
    \widetilde{g_\infty}(y):=\lim_{m\to \infty}\widetilde{g}(y)|_{H_i=H_m^{(i)}}=\widetilde{g}(y)|_{H_i=\zeta(i)}.
\end{align*}
Loosely speaking non-negativity of $\widetilde{g_\infty}(y)$ as a polynomial in $y$ certifies that the derivative of $f$ ``to infinite order'' has the correct sign.

Writing down a closed formula for $\widetilde{g_\infty}(y)$ at this point seems hard, since in the given formulas all appearances of $H_1$ cancel out and only after the cancellation we can set $H_i=\zeta(i)$. The next proposition gives a closed formula for $\widetilde{g}$ which is evidently independent of $H_1$. In particular this gives a closed formula for $\widetilde{g_\infty}(y)$ simply by substituting $H_i=\zeta(i)$.
\begin{prop}
\label{prop:tildeg_formula}
    We have \begin{align*}
        \widetilde{g}(y)=\sum_{k=0}^nc_k(-1)^kw(k)|_{H_1=-y}
    \end{align*}
    as polynomials in $y,c_0,\ldots,c_n,H_2,\ldots,H_n$. In particular \begin{align*}
        \widetilde{g_\infty}(y)=\sum_{k=0}^nc_k(-1)^kw(k)|_{H_1=-y,H_i=\zeta(i)}.
    \end{align*}
\end{prop}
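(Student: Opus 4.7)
The plan is to exploit the $H_1$-independence of $\widetilde{g}(y)$ established in Proposition \ref{prop:H_1-independence}. Since $\widetilde{g}(y)$ is a polynomial in $y, c_0, \ldots, c_n, H_1, \ldots, H_n$ whose partial derivative with respect to $H_1$ vanishes, we are free to substitute any value we like for $H_1$. The clever choice is $H_1 = -y$.

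Starting from the definition
\begin{align*}
    \widetilde{g}(y) = g(y + H_1) = \sum_{k=0}^n (-1)^k \frac{w(k)}{k!}\, p^{(k)}(y+H_1),
\end{align*}
I would first invoke Proposition \ref{prop:H_1-independence} to justify that specializing $H_1 \mapsto -y$ does not change the polynomial. Under this specialization, $p^{(k)}(y+H_1)$ becomes $p^{(k)}(0)$, and since $p(z) = \sum_{k=0}^n c_k z^k$, we have $p^{(k)}(0) = k!\, c_k$. The factorials then cancel cleanly:
\begin{align*}
    \widetilde{g}(y) = \sum_{k=0}^n (-1)^k \frac{w(k)|_{H_1=-y}}{k!} \cdot k!\, c_k = \sum_{k=0}^n c_k (-1)^k w(k)|_{H_1 = -y}.
\end{align*}
This gives exactly the claimed identity as polynomials in $y, c_0, \ldots, c_n, H_2, \ldots, H_n$ (the variable $H_1$ having been eliminated on both sides, on the left by independence and on the right by substitution).

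For the last sentence of the proposition, concerning $\widetilde{g_\infty}(y)$, I would simply recall the definition $\widetilde{g_\infty}(y) = \widetilde{g}(y)|_{H_i = \zeta(i),\, i\geq 2}$ and apply this specialization to both sides of the identity we just proved. Since the right-hand side is already expressed purely in terms of $y$ and $H_2, \ldots, H_n$ (with $H_1$ set to $-y$), the substitution $H_i = \zeta(i)$ for $i \geq 2$ yields the stated formula.

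The argument is essentially a one-line specialization, so there is no serious obstacle; the only subtle point is noticing that the seemingly strange choice $H_1 = -y$ is legitimate precisely because of Proposition \ref{prop:H_1-independence}, and that it cancels the Taylor expansion of $p$ around $y+H_1$ down to its constant-term coefficients. The real content has already been absorbed into the earlier independence statement, whose proof uses the recursion $\tfrac{\dd}{\dd H_1} w(k+1) = (k+1) w(k)$ from Lemma \ref{lem:derivativeH1}.
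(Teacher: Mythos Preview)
Your proof is correct and follows the same strategy as the paper: invoke the $H_1$-independence from Proposition~\ref{prop:H_1-independence} and then specialize to the locus $y+H_1=0$. Your execution is in fact cleaner --- you set $H_1\mapsto -y$ and read off $p^{(k)}(0)=k!\,c_k$ directly, whereas the paper sets $y\mapsto -H_1$ only after expanding $p^{(j)}(y+H_1)$ by the binomial theorem and then collapses the resulting triple sum using $\sum_{i}(-1)^i\binom{m}{i}=[m=0]$.
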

\begin{proof}
    Since for any $0\leq j\leq n$ we have \begin{align*}
        p^{(j)}(y+H_1)&=\sum_{k=j}^nc_k\frac{k!}{(k-j)!}(y+H_1)^{k-j} \\
        &=\sum_{k=j}^n\sum_{i=0}^{k-j}c_k\frac{k!}{(k-j)!}\binom{k-j}{i}y^iH_1^{k-j-i} \\
        &=\sum_{k=j}^n\sum_{i=0}^{k-j}j!\binom{k}{j}\binom{k-j}{i}c_ky^iH_1^{k-j-i}.
    \end{align*}
    Substituting this in the definition of $\widetilde{g}(y)$ we get \begin{align*}
        \widetilde{g}(y)&=\sum_{j=0}^n(-1)^j\frac{w(j)}{j!}p^{(j)}(y+H_1)\\
        &=\sum_{j=0}^n\sum_{k=j}^n\sum_{i=0}^{k-j}(-1)^j\binom{k}{j}\binom{k-j}{i}w(j)c_ky^iH_1^{k-j-i} \\&=\sum_{k=0}^n\sum_{j=0}^{k}\sum_{i=0}^{k-j}(-1)^j\binom{k}{j}\binom{k-j}{i}w(j)c_ky^iH_1^{k-j-i}.
    \end{align*}
    Since we know from Proposition \ref{prop:H_1-independence} that $\widetilde{g}(y)$ does not depend on $H_1$, proving the claim of this proposition is equivalent to proving that \begin{align*}
        \widetilde{g}(-H_1)=\sum_{k=0}^nc_k(-1)^kw(k).
    \end{align*}
    Therefore setting $y=-H_1$ in the above we get \begin{align*}
        \widetilde{g}(-H_1)=\sum_{k=0}^n\sum_{j=0}^{k}\sum_{i=0}^{k-j}(-1)^{j+i}\binom{k}{j}\binom{k-j}{i}w(j)c_kH_1^{k-j}.
    \end{align*}
    The coefficient of the linear monomial $c_k$ for fixed $k\in \{0,\ldots,n\}$ is therefore equal to \begin{align*}
        [c_k]\widetilde{g}(-H_1)&=\sum_{j=0}^{k}\sum_{i=0}^{k-j}(-1)^{j+i}\binom{k}{j}\binom{k-j}{i}w(j)H_1^{k-j}\\
        &=\sum_{j=0}^k(-1)^{j}\binom{k}{j}\left(\sum_{i=0}^{k-j}(-1)^i\binom{k-j}{i}\right)w(j)H_1^{k-j}.
    \end{align*}
    For any $m\in \NN_0$ we have \begin{align*}
        \sum_{i=0}^m(-1)^i\binom{m}{i}=(1-1)^m=\begin{cases}
            1 &\text{if }m=0 \\
            0& \text{else}.
        \end{cases}
    \end{align*}
    And therefore in the above expression for $[c_k]\widetilde{g}(-H_1)$ only one summand is non-zero, namely the summand for $j=k$. This summand equals \begin{align*}
        [c_k]\widetilde{g}(-H_1)=(-1)^kw(k)H_1^0=(-1)^{k}w(k)
    \end{align*}
    as claimed.
\end{proof}

While we can not show that non-negativity of $\widetilde{g_m}(y)$ implies already non-negativity of $\widetilde{g_{m-1}}(y)$ for all $m$, it is true that non-negativity of $\widetilde{g_\infty}$ implies non-negativity of every $\widetilde{g_m}(y)$, i.e. complete monotonicity of $f$. To see this we conclude this section by showing that, up to a change of variables, $\widetilde{g_\infty}$ agrees with the inverse Laplace transform of $f$.
\begin{prop}
For $f,\widetilde{g_\infty}$ defined as above we have
    \begin{align*}
        \widetilde{g_{\infty}}(-\log(x)-\gamma)=\L^{-1}(f)(x)
    \end{align*}
    for all $x \in \RR_{>0}$.
\end{prop}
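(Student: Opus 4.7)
Both sides of the asserted equality are $\RR$-linear in the coefficients $c_0,\ldots,c_n$ of $p$, so it suffices to verify the identity for each monomial $p(y)=y^k$ separately. Concretely I will show that for every $k\in\{0,\ldots,n\}$,
\begin{align*}
    \L^{-1}\!\left(\frac{\log(\cdot)^k}{\cdot}\right)\!(x)\;=\;(-1)^k w(k)\Bigr|_{H_1=\log(x)+\gamma,\;H_i=\zeta(i)\;(i\ge 2)}
\end{align*}
as functions of $x\in\RR_{>0}$, and then sum against the $c_k$.

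\textbf{Computing the left-hand side.} Starting from the standard Laplace identity $\L(y^{-\epsilon}/\Gamma(1-\epsilon))(x)=x^{\epsilon-1}$, valid for small $\epsilon$ (direct substitution $u=xy$ in the defining integral, using $\Gamma(1-\epsilon)=\int_0^\infty u^{-\epsilon}e^{-u}\dd u$), I differentiate $k$ times in $\epsilon$ at $\epsilon=0$. Interchanging $\partial_\epsilon^k$ with the Laplace integral—legitimate because $\epsilon\mapsto y^{-\epsilon}/\Gamma(1-\epsilon)$ is real-analytic on a neighbourhood of $0$ and $e^{-xy}$ provides rapid decay—yields $\L(g_k)(x)=\log(x)^k/x$ where $g_k(y):=\partial_\epsilon^k\big|_{\epsilon=0}y^{-\epsilon}/\Gamma(1-\epsilon)=k!\,[t^k]\,y^{-t}/\Gamma(1-t)$. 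Since $\L|_{\G_{1,n}}$ is injective by Proposition \ref{prop:Rourous_prop}, this identifies $\L^{-1}(\log(\cdot)^k/\cdot)(y)=k!\,[t^k]\,y^{-t}/\Gamma(1-t)$.

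\textbf{Computing the right-hand side.} I use Proposition \ref{prop:tildeg_formula}, which reduces the task to evaluating $(-1)^k w(k)$ at $H_1=\log(x)+\gamma$ and $H_m=\zeta(m)$ for $m\ge 2$. Lemma \ref{lem:exp_generating_fct} expresses $w(k)/k!$ as $[t^k]$ of the exponential generating function $\exp\bigl(\sum_{m\ge 1}(-1)^{m-1}H_m t^m/m\bigr)$. Plugging in the substitutions, the exponent becomes
\begin{align*}
    \log(x)\,t\;+\;\Bigl(\gamma t+\sum_{m\ge 2}\tfrac{(-1)^{m-1}\zeta(m)}{m}t^m\Bigr).
\end{align*}
The classical Weierstrass expansion $\log\Gamma(1+t)=-\gamma t+\sum_{m\ge 2}(-1)^m\zeta(m)t^m/m$ identifies the bracketed tail as $-\log\Gamma(1+t)$, so the EGF collapses to $x^t/\Gamma(1+t)$. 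Extracting $[t^k]$ and then accounting for the $(-1)^k$ via $t\mapsto -t$ gives $(-1)^k w(k)|_\cdots/k!=[t^k]\,x^{-t}/\Gamma(1-t)$, matching the left-hand side after multiplication by $k!$.

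\textbf{Main obstacle.} There is no deep difficulty: the crux is the recognition of the series $\gamma t+\sum_{m\ge 2}(-1)^{m-1}\zeta(m)t^m/m$ as $-\log\Gamma(1+t)$, which is a standard identity that must be cited. The only other technical point is justifying the commutation of $\partial_\epsilon^k$ with $\L$, but this is routine given the analyticity of $\epsilon\mapsto y^{-\epsilon}/\Gamma(1-\epsilon)$ and the exponential decay of the kernel. Once both sides are rewritten as $k!\,[t^k]\,x^{-t}/\Gamma(1-t)$, the proof is complete.
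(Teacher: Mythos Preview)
Your proof is correct. Both you and the paper reduce to monomials $p=y^k$ via linearity and invoke Proposition~\ref{prop:tildeg_formula} together with Lemma~\ref{lem:exp_generating_fct} to express the $\widetilde{g_\infty}$-side through the exponential generating function. The routes diverge from there. The paper pushes forward by applying Fa\`a di Bruno's formula and the Bell-polynomial identities of Lemma~\ref{lem:easy_properties_bell} to extract the coefficient of $c_k\log(x)^m$, and then matches it against the explicit entry $A^{-1}_{m,k}$ obtained earlier in Lemma~\ref{lem:Ainverse}. You instead recognise the substituted exponent as $-\log\Gamma(1+t)+t\log x$ via the Weierstrass expansion, collapse the whole generating function to $x^{t}/\Gamma(1+t)$, and separately identify $\L^{-1}(\log(\cdot)^k/\cdot)$ by differentiating the elementary identity $\L\bigl(y^{-\epsilon}/\Gamma(1-\epsilon)\bigr)(x)=x^{\epsilon-1}$ in $\epsilon$. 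Your argument is more self-contained (it does not rely on the explicit inversion of $A$ already carried out in Section~\ref{sec:main_result}) and avoids the coefficient-by-coefficient bookkeeping; the paper's version, on the other hand, reuses the machinery already in place and makes the link to the matrix $A^{-1}$ transparent. The underlying analytic input is the same---your Weierstrass identity is equivalent to Proposition~\ref{prop:gamma-Bell}---so the two proofs are close cousins rather than genuinely independent arguments.
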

\begin{proof}
    Both sides are linear in the parameters $c_k$ and polynomial in $\log(x)$. By Theorem \ref{thm:intro_main_thm} and by Lemma \ref{lem:Ainverse} the coefficient of $c_k\log(x)^m$ on the right-hand side is \begin{align*}
        A^{-1}_{m,k}=\begin{cases}
            (-1)^m \binom{k}{m}B_{k-m}(-\gamma,-1!\zeta(2),-2!\zeta(3),-3!\zeta(4),\ldots) &\text{ if }k\geq m \\
            0 &\text{ else}
        \end{cases}.
    \end{align*}
    To ease notation we write $\mathbf{d}=(-\gamma,-1!\zeta(2),-2!\zeta(3),-3!\zeta(4),\ldots)$. 

    By Proposition \ref{prop:tildeg_formula} the left-hand side of the claimed formula is \begin{align*}
        \widetilde{g_{\infty}}(-\log(x)-\gamma)= \left.\sum_{k=0}^nc_k(-1)^kw(k)\right|_{\substack{H_1=\log(x)+\gamma \\ H_i= \zeta(i)\ \ \ \ \ \ }}.
    \end{align*}
    We fix $k$ and look only at the $c_k$ coefficient which by Lemma \ref{lem:exp_generating_fct} is given by \begin{align}
    \label{eq:ugly_w_sub}
    \nonumber
        \left.(-1)^kw(k)\right|_{\substack{H_1=\log(x)+\gamma \\ H_i= \zeta(i)\ \ \ \ \ \ }} &= (-1)^kk![t^k]\exp\left((\log(x)+\gamma)t+\sum_{m=2}^\infty\frac{(-1)^{m-1}\zeta(m)}{m}t^m\right)\\
        &=\left.(-1)^k\left(\frac{\partial}{\partial t}\right)^k\exp\left((\log(x)+\gamma)t+\sum_{m=2}^\infty\frac{(-1)^{m-1}\zeta(m)}{m}t^m\right)\right|_{t=0}.
    \end{align}
    The term inside the exponential function produces almost precisely the sequence $d$ when taking higher derivatives and evaluating at $t=0$: \begin{align*}
        \left.\left(\frac{\partial}{\partial t}\right)^k\left((\log(x)+\gamma)t+\sum_{m=2}^\infty\frac{(-1)^{m-1}\zeta(m)}{m}t^m\right)\right|_{t=0}= \begin{cases}
            -d_1+\log(x) &\text{if }k=1 \\
            (-1)^kd_k &\text{else}
        \end{cases}.
    \end{align*}
    We write $\mathbf{d}'$ for this sequence.
    Now by Fa\`a di Bruno's formula, see  \cite[Section 3.4, Theorem A]{Comtet}, we get that (\ref{eq:ugly_w_sub}) in fact equals \begin{align*}
        (-1)^kB_k(\mathbf{d}')&= \sum_{m=0}^k\binom{k}{m}B_{k-m}(\mathbf{d})B_m(-\log(x),0,0,\ldots) \\
        &=\sum_{m=0}^k(-1)^m\binom{k}{m}B_{k-m}(\mathbf{d})\log(x)^m.
    \end{align*}
    Here the first equality follows by applying both parts of Lemma \ref{lem:easy_properties_bell} and the second equality is obvious from the definition of Bell-polynomials which we gave. In particular we see that the $c_k\log(x)^m$ coefficient of $\widetilde{g_\infty}(-\log(x)-\gamma)$ indeed equals $A^{-1}_{m,k}$ as claimed.
\end{proof}
\begin{figure}
    \centering
    \includegraphics[width=0.5\linewidth]{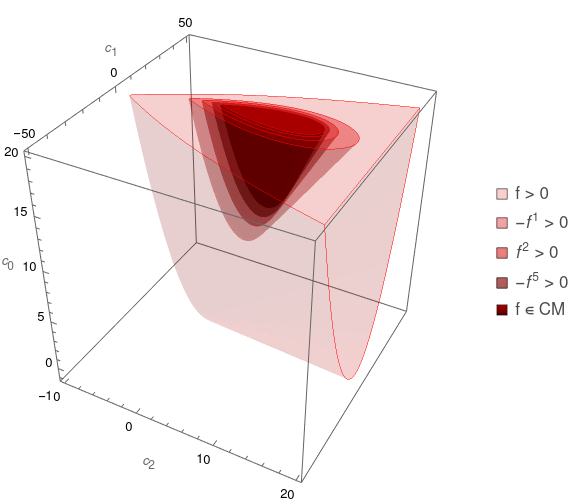}
    \caption{Outer approximations of the complete monotonicity cone in $c_0,c_1,c_2$ for the family of functions $f(x)=\frac{\log(x)^4+c_2 \log(x)^2+ c_1 \log(x)+c_0}{x}$. There seems be the similar nested structure as in Fig.\ref{fig:ShrinkCone2} which seems to converge to the CM region of $f(x)$ visually, see Conjecture \ref{conj:shrinking}.}
    \label{fig:ShrinkCone4}
\end{figure}
\begin{ex}
    Let us consider the case $n=4$. In this case there are five parameters $c_0,\ldots,c_4$ that determine a function $f\in \F_{1,4}$. Let us restrict ourselves to normalized degree four polynomials without a cubic term in the numerator, so that we can plot the resulting regions.\begin{align*}
        \F_{1,4}':=\left\{f\in \F_{1,4} \middle| \exists c_0,c_1,c_2\in \RR: \forall x\in \RR_{>0}: f(x)=\frac{\log(x)^4+c_2\log(x)^2+c_1\log(x)+c_0}{x}\right\}.
    \end{align*}
    The region of completely monotone functions in $\F_{1,4}'$ is given by the condition $\forall y\in \RR: \widetilde{g_\infty}(y)\geq 0$. We proved that this region is the intersection over all $m\in \NN_0$ of the regions cut out by the conditions \begin{align*}
       \forall x\in \RR_{>0}: \left(\frac{-\dd}{\dd x}\right)^mf(x)\geq 0.
    \end{align*}
    Plotting a few of these regions we again see that they even seem to form a chain under inclusion, see Figure \ref{fig:ShrinkCone4}.  We expect this nesting behavior to happen for all values of $n$, which leads us to formulate the following conjecture..
\end{ex}
\begin{conj}
\label{conj:shrinking}
    For any $n\in \NN$ the semialgebraic sets \begin{align*}
        D_k=\left\{f\in \mathcal{F}_{1,n}\middle|\forall x\in \RR:\left(\frac{-\dd}{\dd x}\right)^kf(x)\geq 0\right\}
    \end{align*}
    form a descending chain under inclusion, i.e. $D_k\supseteq D_{k+1}$, which converges to the region $\CM_{1,n}=\bigcap_{k=1}^\infty D_k$.
\end{conj}
\section{Acknowledgments}
This project started at a workshop hosted by the Max Planck Institute for Physics  in Garching, Germany, which was part of the ERC UNIVERSE+ Synergy Project (ERC, UNIVERSE PLUS, 101118787). We thank Johannes Henn, Prashanth Raman,  Bernd Sturmfels and Rainer Sinn for helpful discussion. We also thank the reviewers for their valuable feedback. RM is supported by the Outstanding PhD Students Overseas Study Support Program of University of Science and Technology of China. JW is supported by the SPP 2458 “Combinatorial Synergies”, funded
by the Deutsche Forschungsgemeinschaft (DFG, German Research Foundation),
project ID: 539677510.

\bibliography{bibML}
\bibliographystyle{plain} 
\end{document}